\newtheorem{theorem}{Theorem}[section]
\newtheorem{lemma}[theorem]{Lemma}
\theoremstyle{definition}
\newtheorem{definition}[theorem]{Definition}
\newtheorem{proposition}[theorem]{Proposition}
\newtheorem{corollary}[theorem]{Corollary}
\theoremstyle{remark}
\newtheorem{remark}[theorem]{Remark}
\numberwithin{equation}{section}
\begin{document}

\title{Compactness and rigidity of K\"{a}hler surfaces with constant scalar curvature}

\author{Hongliang Shao}
\address{Department of Mathematics, Capital Normal University, Beijing,
China 100048\newline
Department of Mathematics, University of California, San Diego, USA,}

\email{hongliangshao@foxmail.com}



\keywords{Einstein-Maxwell system, Gromov-Hausdorff convergence, K\"{a}hler surface, rigidity}

\begin{abstract}
A compactness theorem is proved for a family of K\"{a}hler surfaces with constant
scalar curvature and volume bounded from below, diameter bounded
from above, Ricci curvature bounded and the signature bounded from below. Furthermore,
a splitting theorem and some rigidity theorems are proved for Einstein-Maxwell systems.
\end{abstract}
\maketitle
\section{introduction}
A metric $g$ on a manifold $M$ is called Einstein if it's Ricci curvature is constant, i.e.
$$\mathrm{Ric}=\sigma g.$$
The Einstein-Maxwell system was introduced as a generalization of Einstein manifolds which is
the Maxwell equation coupled with the mass free Einstein's gravitational field equation.
\begin{definition}(cf.\cite{Le10})
Let $(M,g)$ be a Riemannian manifold and $F$ be a $2$-form on $M$. If $(g,F)$ satisfies
\begin{equation}\label{eq:1.1}
\left\{
\begin{array}{c}
\mathrm{d}F=0 \\
\mathrm{d}^{\ast }F=0 \\
\overset{\circ }{\mathrm{Ric}}+\overset{\circ }{\left[ F\circ F\right] }=0%
\end{array}%
\right.
\end{equation}%
where $\overset{\circ }{\mathrm{Ric}}$ and $\overset{\circ }{\left[ F\circ F\right] }$
denote the trace free part of the Ricci tensor and $F\circ F$ with respect
to $g$ respectively,
then we say $(g,F)$ satisfies the Einstein-Maxwell equations and $(M,g,F)$ is
an Einstein-Maxwell system.
\end{definition}
The first and second equations in (\ref{eq:1.1}) are the electromagnetic
field equations (Maxwell equations) and $F$ is the electromagnetic field intensity.
Einstein Maxwell equations had been extensively studied in the
literatures of both physics and mathematics (cf. \cite{Le10} and
references in it).

The convergence of Einstein manifolds in the Gromov-Hausdorff
sense has been studied by various authors (cf. \cite{An90},\cite{An89},\cite{BKN89},\cite{CT06},\cite{Ti90} etc).
In \cite{Sh13}, the compactness of a family of Einstein Yang-Mills systems, which are
special solutions to the Einstein-Maxwell equations, was studied.
Inspired by an observation of C.LeBrun that all K\"{a}hler surfaces with constant scalar
curvature can be considered as solutions to the Einstein-Maxwell equations, we are interested
in the compactness of K\"{a}hler surfaces with constant scalar curvature. The main theorem is
the following.

\begin{theorem}\label{thm:1}
Let $(M_{i},J_{i},g_{i})$ be a sequence of K\"{a}hler surfaces with constant scalar curvature.
Assume that there are constants $C>0,v>0, D>0, \Lambda>0$ independent
of $i$ such that
\begin{itemize}
\item[(\romannumeral1)] $\mathrm{Vol}_{M_{i}}\geq v>0,$ and $\mathrm{diam}_{M_{i}}\leq D,$
\item[(\romannumeral2)]$|\mathrm{Ric}_{g_{i}}|\leq \Lambda,$
\item[(\romannumeral3)] $\tau\left( M_{i}\right) \geq -C$.
\end{itemize}
Then a subsequence of $(M_{i},J_{i},g_{i})$ converges, without changing
the subscripts, in the Gromov-Hausdorff sense, to a connected
orbifold $(M_{\infty },J_{\infty},g_{\infty })$ with finite singular points
$\{p_{k}\}_{k=1}^{N},$ each having a neighborhood homeomorphic to the cone
$C\left( S^{n-1}/\Gamma _{k}\right) ,$ with $\Gamma _{k}$ a finite
subgroup of $O\left( n\right)$ . The metric $g_{\infty }$ is a
$C^{0}$  orbifold metric on $M_{\infty },$ which is smooth and K\"{a}hler
off the singular points and has constant scalar curvature.
\end{theorem}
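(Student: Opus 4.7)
The plan is to adapt the orbifold compactness theory developed by Anderson and Bando--Kasue--Nakajima (and refined by Tian) for Einstein four-manifolds to the setting of constant scalar curvature K\"ahler surfaces with bounded Ricci. The first step is to extract a uniform $L^2$ bound on the full Riemann tensor. For a K\"ahler surface one has the pointwise identity $|W^{+}|^{2}=s^{2}/24$, where $s$ is the scalar curvature and $W^{+}$ the self-dual Weyl tensor, so the Hirzebruch signature formula
$$\tau(M_{i})=\frac{1}{12\pi^{2}}\int_{M_{i}}\bigl(|W^{+}|^{2}-|W^{-}|^{2}\bigr)\,dV$$
combined with the constant $s$, the volume and diameter control, and the hypothesis $\tau(M_{i})\ge -C$ yields an upper bound on $\int|W^{-}|^{2}$, hence on $\int|W|^{2}$. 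The $L^{\infty}$ Ricci bound controls the trace-free Ricci and scalar parts of $Rm$, so I obtain $\int_{M_{i}}|Rm_{g_{i}}|^{2}\,dV\le K$ uniformly.

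With the $L^{2}$ curvature bound in hand, I would invoke the Anderson-type $\varepsilon$-regularity theorem: there is $\varepsilon_{0}>0$ such that on any geodesic ball $B_{r}(x)$ of sufficiently small radius with $\int_{B_{r}(x)}|Rm|^{2}<\varepsilon_{0}$, the bounded Ricci hypothesis upgrades to an $L^{\infty}$ bound on $Rm$, and thence, in harmonic coordinates, to a uniform $C^{1,\alpha}$ bound on the metric. Combining this with Gromov precompactness (available from the diameter and Ricci bounds) lets me pass to a subsequence converging in the Gromov--Hausdorff sense to a length space $(M_{\infty},g_{\infty})$, with $C^{1,\alpha}$-convergence on compact subsets away from a finite set $\{p_{k}\}$ of curvature concentration points; finiteness is forced by the uniform $L^{2}$ bound, since at each $p_{k}$ a definite energy $\ge \varepsilon_{0}$ is absorbed.

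The third step is a blow-up analysis at each singular point $p_{k}$. Choosing rescalings $\lambda_{i}\to 0$ so that the rescaled curvature is normalized, the bound $|\mathrm{Ric}_{g_{i}}|\le \Lambda$ scales away to zero, so every bubble limit is a complete Ricci-flat K\"ahler surface of finite $L^{2}$ curvature. Standard analysis (Bando--Kasue--Nakajima) classifies its ends as ALE ends modeled on $\mathbb{R}^{4}/\Gamma_{k}$ with $\Gamma_{k}\subset O(4)$ finite. A removable-singularities argument applied to $g_{\infty}$ then shows that a neighbourhood of $p_{k}$ in $M_{\infty}$ is homeomorphic to the cone $C(S^{3}/\Gamma_{k})$ and that $g_{\infty}$ extends across $p_{k}$ as a $C^{0}$ orbifold metric, smooth on the complement of $\{p_{k}\}$.

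Finally I would promote the limit to a smooth K\"ahler metric with constant scalar curvature on the regular locus. Elliptic regularity applied in harmonic coordinates to the Ricci equation upgrades $C^{1,\alpha}$-convergence to smooth convergence once the Ricci tensor is bounded, and passage to the limit in $s(g_{i})\equiv s_{i}$ (with $s_{i}$ uniformly bounded) gives $s(g_{\infty})\equiv s_{\infty}$ on the regular part. The complex structures $J_{i}$ are parallel with respect to $g_{i}$, hence have controlled derivatives, so (after the harmonic-coordinate diffeomorphisms) they converge to a parallel $J_{\infty}$ whose Nijenhuis tensor vanishes by continuity. The step I expect to be most delicate is the bubble-tree bookkeeping at each $p_{k}$: one must rule out collapse of intermediate scales and verify that the local model really is a single orbifold cone rather than a nested tree of ALE bubbles, which requires a careful iteration of the $\varepsilon$-regularity and energy-balance argument in the spirit of Tian.
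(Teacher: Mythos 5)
Your first two steps match the paper's: the identity $|W^{+}|^{2}=R^{2}/24$ for a K\"ahler surface plus the signature bound, the Ricci bound and Bishop volume comparison give a uniform $L^{2}$ bound on $\mathrm{Rm}$, and Anderson's $\varepsilon$-regularity in harmonic coordinates together with Gromov precompactness produce the orbifold Gromov--Hausdorff limit with finitely many cone singularities and $C^{1,\alpha}$ convergence on the regular part. (Your blow-up/ALE discussion at the singular points is more detailed than the paper, which simply cites Anderson's theorem for the cone structure; that part is fine.)

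The genuine gap is in your final regularity step. You assert that ``elliptic regularity applied in harmonic coordinates to the Ricci equation upgrades $C^{1,\alpha}$-convergence to smooth convergence once the Ricci tensor is bounded.'' This is false: in harmonic coordinates the equation $g^{kl}\partial_{k}\partial_{l}g_{ij}+Q(g,\partial g)=-2R_{ij}$ with only an $L^{\infty}$ bound on $R_{ij}$ yields $W^{2,p}$ for all $p$, hence $C^{1,\alpha}$, and the bootstrap stops there because you have no equation giving additional regularity of the Ricci tensor itself. An $L^{\infty}$ Ricci bound never forces $C^{2}$ control of the metric. The paper's essential extra ingredient, which your proposal omits, is LeBrun's observation that a cscK surface solves the Einstein--Maxwell system with $F=\omega+\tfrac12\overset{\circ}{\rho}$: the trace-free Ricci is then encoded in a harmonic $2$-form $F$, the Hodge equation $\Delta_{\mathrm d}F=0$ is elliptic in the harmonic coordinates and gives $F\in W^{2,2p}\subset C^{1,\alpha}$, the function $f=\tfrac14(R-|F|^{2})$ is then $C^{1,\alpha}$ (here constancy of $R$ is used), and Schauder estimates applied to $\mathrm{Ric}-\eta=fg$ lift $g$ to $C^{2,\alpha}$; iterating the coupled system gives $C^{\infty}$ bounds and hence smooth K\"ahler convergence with constant scalar curvature on the regular part. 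Without this coupled bootstrap (or a substitute such as the fourth-order cscK equation for local K\"ahler potentials), your conclusion that $g_{\infty}$ is smooth, K\"ahler and of constant scalar curvature off the singular set is unsupported.
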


\begin{remark}
If we replace the condition $\tau\left( M_{i}\right) \geq -C$ by $|W^{-}|\leq C$,
then the limit space is a smooth K\"{a}hler manifold with constant scalar curvature.
\end{remark}

\begin{remark}
This convergence result holds for Einstein-Maxwell systems with constant scalar curvature
if we replace the condition $\tau\left( M_{i}\right) \geq -C$ by a $L^{\frac{n}{2}}$-bound
for the Riemannian curvature tensor. Moreover, if the underlying manifolds are of odd-dimension,
then the limit space is a smooth manifold.
\end{remark}
This paper is organized as follows. Section 2 is devoted to present some basic properties of Einstein-Maxwell
systems especially 4-dimensional manifolds. In section 3 we shall complete the proof of Theorem \ref{thm:1}.
In section 4, a splitting theorem is proved for odd-dimensional Einstein-Maxwell systems with $\mathrm{Ric}-\eta=0$.
In section 5, some rigidity properties of Einstein-Maxwell systems are studied. In particular,
we will show that a generic K\"{a}hler surface with constant scalar curvature and positive
isotropic curvature must be biholomorphic to $\mathbb{C}P^{2}$ with constant holomorphic sectional curvature.
\bigskip
\section{Basic Properties of Einstein-Maxwell systems}
Assume $(M,g,F)$ is an Einstein-Maxwell system. We rewrite the Einstein-Maxwell equations as follows:
\begin{equation}\label{eq:2.1}
\left\{
\begin{array}{c}
\mathrm{Ric}-\eta=fg\\
\Delta_{\mathrm{d}}F=0.
\end{array}
\right.
\end{equation}
Where $\eta=-F\circ F$, and $f=R-|F|^{2}$ is a smooth function on $M$, and $\Delta_{\mathrm{d}}=\mathrm{d}\mathrm{d}^{*}+\mathrm{d}^{*}\mathrm{d}$
is the Hodge Laplace. $\Delta_{\mathrm{d}}F=0$ is equivalent to
\begin{equation*}
\left\{
\begin{array}{c}
\mathrm{d}F=0 \\
\mathrm{d}^{\ast }F=0
\end{array}%
\right.
\end{equation*}

The Schur lemma states that if a Riemannian metric $g$ satisfies
$\mathrm{Ric}(g)=\frac{1}{n}Rg$ for $n\geq 3$, then the scalar curvature $R$ is constant.
Unfortunately, the generalized system (\ref{eq:2.1}) does not own this nice property.
However, 4-dimensional Einstein Maxwell systems possess a privilege that
the scalar curvature of $g$ turns out bo be constant.
\begin{lemma}\label{lem:2.1}
Let $M$ be a complete Riemannian manifold with a Riemannian metric $g$ and $F$ be a $2$-form on $M$.
Assume that $(g,F)$ satisfies the Einstein-Maxwell equations (\ref{eq:2.1}),
then there is a constant $C$ such that
$$
(4-2n)R+(n-4)|F|^{2}=C.
$$
When $n=4$, the scalar curvature $R$ of the metric $g$ is constant.

When $n=2$, $|F|$ is constant. Moreover,if $M$ is compact, then $F=\pm |F|\mathrm{d}\mu$ and $\overset{\circ}{\mathrm{Ric}}=0$.

When $n\neq 2\;\text{ or }4 $, $g$ has constant scalar curvature if and only if $|F|$ is constant.
\end{lemma}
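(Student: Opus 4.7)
The plan is to apply the divergence $\nabla^{i}$ to the first equation of (\ref{eq:2.1}), namely $\mathrm{Ric}_{ij} - \eta_{ij} = f g_{ij}$, and to combine the twice-contracted second Bianchi identity $\nabla^{i}\mathrm{Ric}_{ij} = \tfrac{1}{2}\nabla_{j}R$ with a Maxwell-equation computation of $\nabla^{i}\eta_{ij}$. On the right-hand side $\nabla^{i}(fg_{ij}) = \nabla_{j}f$, so the final identity will be an exact $1$-form in $R$ and $|F|^{2}$, from which constancy follows by connectedness.

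The heart of the argument is the computation of $\nabla^{i}\eta_{ij}$, where $\eta_{ij} = -F_{ik}F_{j}{}^{k}$. Splitting
$$\nabla^{i}(F_{ik}F_{j}{}^{k}) = (\nabla^{i}F_{ik})F_{j}{}^{k} + F_{ik}\nabla^{i}F_{j}{}^{k},$$
the first summand vanishes by $\mathrm{d}^{*}F = 0$. For the second, I would use $\mathrm{d}F = 0$ in the form $\nabla_{l}F_{jk} + \nabla_{j}F_{kl} + \nabla_{k}F_{lj} = 0$ together with the antisymmetry $F_{ij} = -F_{ji}$ and a careful relabeling of dummy indices to obtain
$$\nabla^{i}(F_{ik}F_{j}{}^{k}) = \tfrac{1}{4}\nabla_{j}|F|^{2},$$
which is the familiar statement that the Maxwell stress-energy tensor $F_{ik}F_{j}{}^{k} - \tfrac{1}{4}|F|^{2}g_{ij}$ is divergence-free whenever $\mathrm{d}F = 0$ and $\mathrm{d}^{*}F = 0$. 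Substituting for $f$ by tracing the first equation of (\ref{eq:2.1}) (so that $f$ becomes a linear combination of $R$ and $|F|^{2}$) and collecting terms, the Bianchi-plus-Maxwell identity collapses to a single relation $\nabla_{j}\bigl[(4-2n)R + (n-4)|F|^{2}\bigr] = 0$. Since $M$ is connected, the bracketed quantity is some constant $C$.

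The three special cases then fall out by inspecting the coefficients $4-2n$ and $n-4$. When $n=4$ the $|F|^{2}$-coefficient vanishes while the $R$-coefficient does not, forcing $R$ to be constant; when $n=2$ the roles reverse and $|F|$ is constant. To finish the dimension-$2$ claim under compactness, I would use that every $2$-form on a $2$-manifold equals $\alpha\,\mathrm{d}\mu$ for some scalar $\alpha$; the equation $\mathrm{d}^{*}F=0$ together with compactness of $M$ forces $\alpha$ to be constant, giving $F = \pm|F|\,\mathrm{d}\mu$. A direct orthonormal-frame computation then shows $(\mathrm{d}\mu)\circ(\mathrm{d}\mu) = g$ in dimension two, so $F\circ F = |F|^{2}g$ is pure trace, $\overset{\circ}{[F\circ F]} = 0$, and the Einstein-Maxwell equation yields $\overset{\circ}{\mathrm{Ric}} = 0$. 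Finally, when $n\neq 2,4$ both coefficients are nonzero, so constancy of $(4-2n)R + (n-4)|F|^{2}$ is equivalent to constancy of either $R$ alone or $|F|$ alone.

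The main technical obstacle is the Maxwell divergence identity $\nabla^{i}(F_{ik}F_{j}{}^{k}) = \tfrac{1}{4}\nabla_{j}|F|^{2}$, which requires applying both Maxwell equations simultaneously and handling the antisymmetry of $F$ with some care. Once that identity is in hand, the remainder of the lemma is linear algebra in the two coefficients, plus the short extra argument in dimension two exploiting the fact that $2$-forms on a surface are pure multiples of the volume form.
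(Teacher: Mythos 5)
Your proposal is correct and follows essentially the same route as the paper: the divergence identity $\nabla^{i}\eta_{ij}=\tfrac{1}{4}\nabla_{j}|F|^{2}$ (obtained from $\mathrm{d}F=0$, $\mathrm{d}^{*}F=0$) combined with the contracted second Bianchi identity and the trace $R-|F|^{2}=nf$ is exactly the paper's computation, merely organized as a direct divergence rather than by antisymmetrizing and tracing the covariant derivative of the first equation. The only minor divergence is in the compact $n=2$ case, where you deduce $F=\pm|F|\,\mathrm{d}\mu$ pointwise from $d^{*}(\alpha\,\mathrm{d}\mu)=-{*}\,\mathrm{d}\alpha=0$ instead of the paper's Hodge-theory/Poincar\'e-duality argument; both work, and yours is arguably more elementary.
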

\begin{proof}
Taking covariant derivative to the first equation of (\ref{eq:2.1}) we get
$$
\nabla_{i}R_{jk}-\nabla_{i}\eta_{jk}=\nabla_{i}fg_{jk},
$$
and
$$
\nabla_{j}R_{ik}-\nabla_{j}\eta_{ik}=\nabla_{j}fg_{ik}.
$$
From these we obtain
\begin{equation}\label{eq:2.4}
\nabla_{i}R_{jk}-\nabla_{j}R_{ik}-(\nabla_{i}\eta_{jk}-\nabla_{j}\eta_{ik})
=\nabla_{i}fg_{jk}-\nabla_{j}fg_{ik},
\end{equation}
Since
\begin{eqnarray*}
g^{jk}\nabla _{j}\eta _{ik} &=&g^{jk}\nabla _{j}\left(
g^{pq}F_{ip}F_{kq}\right) \\
&=&g^{jk}g^{pq}F_{ip}\nabla _{j}F_{kq}+g^{jk}g^{pq}F_{kq}\nabla _{j}F_{ip} \\
&=&-g^{pq}F_{ip}d^{\ast }F_{p}+g^{jk}g^{pq}F_{ip}\nabla _{j}F_{kq} \\
&=&-g^{jk}g^{pq}F_{kq}( \nabla _{i}F_{pj}+\nabla _{p}F_{ji}) \\
&=&\frac{1}{4}\nabla _{i}\left\vert F\right\vert ^{2},
\end{eqnarray*}%
where we have used the second equation of the Einstein-Maxwell equations
(\ref{eq:2.1}) and the second Bianchi identity.
Taking trace by $g^{jk}$ to both sides of equation (\ref{eq:2.4}), thus we have
$$
\frac{1}{2}\nabla_{i}R-\frac{3}{4}\nabla_{i}|F|^{2}=(n-1)\nabla_{i}f.
$$
On the other hand, taking trace of the first equation of (\ref{eq:2.1}) we have
$$
R-|F|^{2}=nf.
$$
and then take derivative, we have
$$
\nabla_{i}R-\nabla_{i}|F|^{2}=n\nabla_{i}f.
$$
Thus it is easy to see that
$$
\nabla((4-2n)R+(n-4)|F|^{2})=0.
$$
Consequently, there exists a constant $C$ such that
$$
(4-2n)R+(n-4)|F|^{2}=C.
$$
In particular, if $n=4$, we derive that the scalar curvature of the metric is constant.
When $n=2$, $|F|$ is constant and $F$ is harmonic. If furthermore $M$
is compact, by Hodge theory, there is a unique harmonic form in each cohomology group up to scaling.
There is an isomorphism
$$H^{2}(M;\mathbb{R})\cong\mathcal{H}^{2},$$
where $\mathcal{H}^{2}$ is the space of harmonic 2-forms.

On the other hand, since $M$ is 2-dimensional, by the Poincar\'{e} dual theorem,
 $$H^{2}(M;\mathbb{R})\cong H_{0}(M)\bigotimes\mathbb{R}\cong\mathbb{R}.$$
Thus $F=\pm|F|\mathrm{d}\mu$. In this case, $\overset{\circ}{\mathrm{Ric}}=0.$

When $n\neq 2,4$, the scalar curvature is constant if and only if the norm of $F$ is constant.
\end{proof}

We note here that this theorem can be interpreted from another point of view (cf.\cite{Le10}).
Let $M$ be a smooth manifold. Denote
$$
\mathcal{M}_{V}=\{\;\text{Riemannian metrics with volume}\; V \;\text{on}\; M\}.
$$
Assume $\theta \in H^{2}(M; \mathbb{R})$ is a fixed De Rham class of $M$. Define a functional
$$
\begin{array}{c}
\mathcal{M}_{V}\times\theta\longrightarrow\mathbb{R}\\
(g,F)\mapsto\int_{M}(R+|F|^{2})\mathrm{d}\mu.
\end{array}
$$
In the category of compact Riemannian manifolds,
Einstein-Maxwell equations can be interpreted as the Euler-Lagrange equations of
this functional. In particular, when $n=4$, $\int|F|^{2}d\mu$ is conformal invariant, which implies that
critical points of the above functional are just the critical points of the Yamabe functional,
thus must have constant scalar curvature.

Suppose now that $M$ is an oriented Riemannian $4$-manifold and $g$
is a Riemannian metric on $M$. The Hodge star
operator $*:\Omega^{2}M\rightarrow \Omega^{2}M$ is defined by
$$
\alpha\wedge\ast\beta=(\alpha, \beta)_{g}d\mu_{g}.
$$
Where $\alpha,\beta\in\Omega^{2}M$, $(\cdot,\cdot)_{g}$ denotes the induced inner product
on $\Omega^{2}M$ and $d\mu_{g}$ denotes the volume form of $g$. It is well known that
$\ast^{2}=1_{\Omega^{2}}$. Then we have the decomposition of 2-forms
into self-dual and anti-self-dual forms, defined to be the $\pm1$ eigenspaces
of the Hodge star operator. We denote them by
$\Omega_{M}^{+},\Omega_{M}^{-}$ respectively.
Accordingly, the curvature operator $\mathrm{Rm}(g)$ decomposes as follows (cf.\cite{Be87}):
$$
\mathrm{Rm}=\begin{pmatrix}
W^{+}+\frac{R}{12}\mathrm{I} & \overset{\circ}{\mathrm{Ric}}\\
\overset{\circ}{\mathrm{Ric}} & W^{-}+\frac{R}{12}\mathrm{I}
\end{pmatrix}.
$$
Where the trace free Ricci curvature $\overset{\circ}{\mathrm{Ric}}=\mathrm{Ric}-\frac{R}{4}g$ acts on 2-forms by
$$\overset{\circ}{\mathrm{Ric}}(\alpha)=\overset{\circ}{R}_{ik}\alpha^{k}_{j}-\overset{\circ}{R}_{jk}\alpha^{k}_{i}.$$
The Bianchi identity implies that
$$\mathrm{tr}W^{+}=\mathrm{tr}W^{-}.$$
If $M$ is compact, by Hodge theory that there is a unique harmonic form in each cohomology group up to
scaling. Then there is an isomorphism
$$H^{2}(M;\mathbb{R})\cong\mathcal{H}^{+}\bigoplus\mathcal{H}^{-},$$
where
$$\mathcal{H}^{+}=\{ \text{self-dual harmonic 2-forms}\},$$
and
$$\mathcal{H}^{-}=\{ \text{anti-self-dual harmonic 2-forms}\}.$$
The signature $\tau$ of $M$ is defined by
$$\tau=b_{+}-b_{-},$$
here $b_{\pm}=\mathrm{dim}\mathcal{H}^{\pm}.$

The Hirzebruch signature theorem tells us that
$$\tau=\frac{1}{12\pi^{2}}\int_{M}|W^{+}|^{2}-|W^{-}|^{2}\mathrm{d}\mu.$$

We present a simple lemma for 4-manifolds first.
\begin{lemma}\label{lem:2.2}
Let $F$ be a harmonic $2$-form on a 4-dimensional oriented manifold $M,$
then
$$\overset{\circ}{\eta}=-2F^{+}\circ F^{-}$$
and $$\left\vert \overset{\circ }{\eta }\right\vert ^{2}=\left\vert
F^{+}\right\vert ^{2}\left\vert F^{-}\right\vert ^{2},$$
where $F^{+}$ and $F^{-}$ denote the self-dual part and the anti-self-dual part
of $F$ respectively.
\end{lemma}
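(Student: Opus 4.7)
The strategy is to reduce the assertion to pointwise algebra in an adapted oriented orthonormal frame. The only structural input used is the decomposition $F = F^{+} + F^{-}$; harmonicity plays no role. The key algebraic fact behind the scenes is that the splitting $\Lambda^{2} = \Lambda^{+}\oplus \Lambda^{-}$ corresponds, under the metric identification of $2$-forms with skew-symmetric endomorphisms of $TM$, to the Lie algebra decomposition $\mathfrak{so}(4) = \mathfrak{su}(2)_{+}\oplus\mathfrak{su}(2)_{-}$. The two factors commute, so $F^{+}\circ F^{-} = F^{-}\circ F^{+}$ as $(0,2)$-tensors, and in particular $F^{+}\circ F^{-}$ is already symmetric.

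I would begin by recording the pointwise identity $\omega\circ\omega = c\,|\omega|^{2}\,g$ valid for any self-dual or anti-self-dual $2$-form $\omega$ on a four-manifold (the constant $c$ depending only on the normalization of $|\cdot|$). This is checked by picking an oriented orthonormal frame in which $\omega = \lambda(e^{1}\wedge e^{2}+e^{3}\wedge e^{4})$, a canonical form always available for (anti-)self-dual $2$-forms, and computing directly. Applied to both $F^{+}$ and $F^{-}$ this yields
\[
F^{+}\circ F^{+} = c\,|F^{+}|^{2}\,g,\qquad F^{-}\circ F^{-} = c\,|F^{-}|^{2}\,g.
\]
Expanding $F\circ F = F^{+}\circ F^{+} + F^{-}\circ F^{-} + F^{+}\circ F^{-} + F^{-}\circ F^{+}$, using the commutativity above together with the orthogonality $|F|^{2} = |F^{+}|^{2}+|F^{-}|^{2}$, gives $F\circ F = c\,|F|^{2}\,g + 2\,F^{+}\circ F^{-}$. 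Since $F^{+}\circ F^{-}$ is trace-free (its trace is a multiple of $\langle F^{+},F^{-}\rangle = 0$), subtracting the pure-trace part from $\eta=-F\circ F$ produces the first assertion
\[
\overset{\circ}{\eta} = -2\,F^{+}\circ F^{-}.
\]

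For the norm identity it suffices to show $|F^{+}\circ F^{-}|^{2} = \tfrac{1}{4}|F^{+}|^{2}|F^{-}|^{2}$ and multiply by $4$. I would normalize the frame so that $F^{+}=\lambda(e^{12}+e^{34})$, write $F^{-}$ in the standard anti-self-dual basis $\omega_{1}^{-} = e^{12}-e^{34}$, $\omega_{2}^{-} = e^{13}+e^{24}$, $\omega_{3}^{-} = e^{14}-e^{23}$, and compute the three tensors $(e^{12}+e^{34})\circ\omega_{a}^{-}$ explicitly. A short case-by-case check shows that they are pairwise orthogonal with a common squared norm proportional to $|\omega_{a}^{-}|^{2}$, so the desired identity follows by a Pythagoras-type summation on the coefficients of $F^{-}$ in this basis. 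Everything in the argument is routine tensor bookkeeping once the frame is chosen; the only structural ingredient that is not immediate is the commutativity $F^{+}\circ F^{-} = F^{-}\circ F^{+}$, which is the real reason the cross-term contributions collapse so cleanly.
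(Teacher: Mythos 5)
Your argument is correct, but it takes a genuinely different route from the paper's. The paper exploits the normal form of a \emph{single} $2$-form in four dimensions: it skew-diagonalizes $F$ at a point, writes $F=\mu\,\mathrm{d}x^{1}\wedge \mathrm{d}x^{2}+\nu\,\mathrm{d}x^{3}\wedge \mathrm{d}x^{4}$, and then reads off everything from the two scalars $\mu,\nu$ --- the crucial side effect being that $F^{+}$ and $F^{-}$ are \emph{simultaneously} put in canonical form, $F^{\pm}=\tfrac{1}{2}(\mu\pm\nu)\left(\mathrm{d}x^{1}\wedge \mathrm{d}x^{2}\pm \mathrm{d}x^{3}\wedge \mathrm{d}x^{4}\right)$, so both identities drop out of a two-parameter computation, with $\lvert\overset{\circ}{\eta}\rvert^{2}=(\mu^{2}-\nu^{2})^{2}=(\mu+\nu)^{2}(\mu-\nu)^{2}$. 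You instead derive the first identity structurally, from the commutativity of the two $\mathfrak{su}(2)$ factors of $\mathfrak{so}(4)$, the pure-trace identity $\omega\circ\omega=c\lvert\omega\rvert^{2}g$ for (anti-)self-dual forms, and the orthogonality $\langle F^{+},F^{-}\rangle=0$; this is cleaner conceptually and makes transparent \emph{why} only the cross term survives in $\overset{\circ}{\eta}$, but you then pay for it in the norm identity, where normalizing only $F^{+}$ forces the three-case orthogonality check over the anti-self-dual basis (that check is the standard fact that $\omega^{+}\otimes\omega^{-}\mapsto\omega^{+}\circ\omega^{-}$ is a conformal isomorphism $\Lambda^{+}\otimes\Lambda^{-}\cong S^{2}_{0}T^{*}M$, so it does go through). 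The two proofs meet if you observe that the $\mathrm{U}(2)$ stabilizer of your normalized $F^{+}$ acts transitively on directions in $\Lambda^{-}$, letting you rotate $F^{-}$ onto $e^{12}-e^{34}$ and recover the paper's normal form without casework. You are also right that harmonicity of $F$ plays no role: the paper's proof never uses it either, and the lemma is pure pointwise algebra.
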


\begin{proof}
Fix a point $x\in M$ and choose local coordinates such that $g_{ij}=\delta
_{ij}$ and $F$ has been skew-diagonalized at $x,$ with out loss of
generality, we may assume $F=\mu \mathrm{d}x^{1}\Lambda \mathrm{d}x^{2}+\nu \mathrm{d}x^{3}\Lambda
\mathrm{d}x^{4},$ where $\mu ,\nu \in \mathbb{R}.$ It is easily to compute that
\begin{equation*}
\eta = \mu ^{2} \left(
\mathrm{d}x^{1}\otimes \mathrm{d}x^{1}+\mathrm{d}x^{2}\otimes \mathrm{d}x^{2}\right) + \nu^{2}\left( \mathrm{d}x^{3}\otimes \mathrm{d}x^{3}+\mathrm{d}x^{4}\otimes \mathrm{d}x^{4}\right)
\end{equation*}
and
\begin{equation*}
\overset{\circ }{\eta }=\frac{1}{2}\left( \mu ^{2}-\nu ^{2}\right) \left(
\mathrm{d}x^{1}\otimes \mathrm{d}x^{1}+\mathrm{d}x^{2}\otimes \mathrm{d}x^{2}\right) +\frac{1}{2}\left( \nu
^{2}-\mu ^{2}\right) \left( \mathrm{d}x^{3}\otimes \mathrm{d}x^{3}+\mathrm{d}x^{4}\otimes \mathrm{d}x^{4}\right)
\end{equation*}%
On the other hand,%
\begin{equation*}
F^{+}=\frac{1}{2}\left( \mu +\nu \right) \left( \mathrm{d}x^{1}\Lambda
\mathrm{d}x^{2}+\mathrm{d}x^{3}\Lambda \mathrm{d}x^{4}\right),
\end{equation*}%
and%
\begin{equation*}
F^{-}=\frac{1}{2}\left( \mu -\nu \right) \left( \mathrm{d}x^{1}\Lambda
\mathrm{d}x^{2}-\mathrm{d}x^{3}\Lambda \mathrm{d}x^{4}\right).
\end{equation*}%
Then we have
$$\overset{\circ}{\eta}=-2F^{+}\circ F^{-},$$
and
\begin{equation*}
\left\vert \overset{\circ }{\eta }\right\vert ^{2}=\left( \mu ^{2}-\nu
^{2}\right) ^{2},
\end{equation*}%
\begin{equation*}
\left\vert F^{+}\right\vert ^{2}=\left( \mu +\nu \right) ^{2},
\end{equation*}%
\begin{equation*}
\left\vert F^{-}\right\vert ^{2}=\left( \mu -\nu \right) ^{2}.
\end{equation*}%
Finally we have the identity%
\begin{equation*}
\left\vert \overset{\circ }{\eta }\right\vert ^{2}=\left\vert
F^{+}\right\vert ^{2}\left\vert F^{-}\right\vert ^{2}.
\end{equation*}
\end{proof}

For any K\"{a}hler surface, $|W^{+}|^{2}=\frac{R^{2}}{24}$.
In particular, if the K\"{a}hler metric is of constant scalar curvature $R$, then the self-dual
Weyl tensor $W^{+}$ can be written as
$$W^{+}=\begin{pmatrix}
\frac{R}{6}&  &  \\
  &\-\frac{R}{12}&  \\
  &  &-\frac{R}{12}
\end{pmatrix}.$$
Thus a lower bound of $\tau$ gives an upper bound of $\|W^{-}\|_{2}$ and then
an upper bound of $\|W\|_{2}$ for K\"{a}hler surfaces with constant scalar curvature.
In particular, an upper bound of $\|\overset{\circ}{W^{-}}\|$ gives an upper bound for
the Riemannian curvature tensor $\mathrm{Rm}$ by virtue of $$\mathrm{tr}W^{+}=\mathrm{tr}W^{-}.$$

Based on the above discussion, we have the following easy corollary.
\begin{corollary}
Let $M$ be an oriented 4-dimensional manifold, and $(M,g,F)$ be an Einstein-Maxwell system.
\begin{itemize}
\item [(1)]Suppose
$b_{+}=0$ or $b_{-}=0$, then $(M,g,F)$ reduces to an Einstein manifold, and thus satisfies
$$2\chi(M)\pm3\tau(M)\geq0.$$
\item [(2)] If $(M,g)$ is a K\"{a}hler surface with $f=R-|F|^{2}\geq0$, then
$$2\chi(M)+\tau(M)\geq0.$$
\end{itemize}
\end{corollary}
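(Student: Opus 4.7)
\emph{Part (1).} The plan is to use the Maxwell equations in \eqref{eq:2.1} to conclude that $F$ is a harmonic 2-form, whence its self-dual and anti-self-dual parts $F^{+}$ and $F^{-}$ are each harmonic and lie in $\mathcal{H}^{+}$ and $\mathcal{H}^{-}$ respectively. Then $b_{+}=0$ forces $F^{+}=0$ (and symmetrically $b_{-}=0$ forces $F^{-}=0$), so Lemma \ref{lem:2.2} gives $|\overset{\circ}{\eta}|^{2}=|F^{+}|^{2}|F^{-}|^{2}=0$, hence $\overset{\circ}{\eta}=0$. The first equation of \eqref{eq:2.1} then reads $\overset{\circ}{\mathrm{Ric}}=\overset{\circ}{\eta}=0$, making $(M,g)$ Einstein, and the classical Hitchin--Thorpe inequality $2\chi(M)\geq 3|\tau(M)|$ for compact oriented Einstein 4-manifolds finishes this part.

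\emph{Part (2).} For the K\"ahler case the plan is to control the Chern number $c_{1}^{2}=2\chi+3\tau$ from below. Combining Gauss--Bonnet with the Hirzebruch signature formula gives the standard identity
\[
2\chi+3\tau=\frac{1}{4\pi^{2}}\int_{M}\left(2|W^{+}|^{2}+\frac{R^{2}}{24}-\frac{|\overset{\circ}{\mathrm{Ric}}|^{2}}{2}\right)d\mu,
\]
and the K\"ahler identity $|W^{+}|^{2}=R^{2}/24$ recalled in the text collapses this to
\[
c_{1}^{2}=\frac{1}{4\pi^{2}}\int_{M}\left(\frac{R^{2}}{8}-\frac{|\overset{\circ}{\mathrm{Ric}}|^{2}}{2}\right)d\mu.
\]
Next I would use $\overset{\circ}{\mathrm{Ric}}=\overset{\circ}{\eta}$ from \eqref{eq:2.1} together with Lemma \ref{lem:2.2} and AM--GM to obtain $|\overset{\circ}{\mathrm{Ric}}|^{2}=|F^{+}|^{2}|F^{-}|^{2}\leq |F|^{4}/4$. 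Since $f=R-|F|^{2}\geq 0$ forces $R\geq|F|^{2}\geq 0$ and hence $|F|^{4}\leq R^{2}$, the bound $|\overset{\circ}{\mathrm{Ric}}|^{2}\leq R^{2}/4$ holds pointwise. The integrand is then non-negative everywhere, yielding $c_{1}^{2}=2\chi+3\tau\geq 0$. In the regime $\tau\leq 0$ this immediately upgrades to $2\chi+\tau\geq 2\chi+3\tau\geq 0$.

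\emph{Main obstacle.} The delicate case is $\tau>0$, where the curvature estimate alone yields only $c_{1}^{2}\geq 0$, which is weaker than $2\chi+\tau\geq 0$. Closing the argument in this regime requires the supplementary input $\chi(M)\geq 0$; the arithmetic identity $3(2\chi+\tau)=c_{1}^{2}+4\chi$ would then conclude the proof. I would establish $\chi\geq 0$ by invoking the classification of compact K\"ahler surfaces admitting a constant-scalar-curvature metric with $R\geq 0$ (Lemma \ref{lem:2.1} makes $R$ constant in dimension four, and $R\geq|F|^{2}\geq 0$ by hypothesis): such surfaces have Kodaira dimension at most zero, and in that range $\chi=c_{2}\geq 0$. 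Packaging this appeal to complex-geometric classification cleanly is where I expect the main technical subtlety to lie.
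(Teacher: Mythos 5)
Your part (1) reproduces the paper's argument exactly: $F$ is harmonic by the Maxwell equations, so $F^{\pm}$ are harmonic and $b_{+}=0$ (resp.\ $b_{-}=0$) forces $F^{+}=0$ (resp.\ $F^{-}=0$); Lemma \ref{lem:2.2} then gives $\overset{\circ}{\eta}=-2F^{+}\circ F^{-}=0$, hence $\overset{\circ}{\mathrm{Ric}}=0$, and Hitchin--Thorpe finishes. The computational core of your part (2) --- Gauss--Bonnet plus the signature formula, the K\"ahler identity $|W^{+}|^{2}=R^{2}/24$, the substitution $\overset{\circ}{\mathrm{Ric}}=\overset{\circ}{\eta}$ with Lemma \ref{lem:2.2}, AM--GM on $|F^{+}|^{2}|F^{-}|^{2}$, and $R\geq|F|^{2}$ from $f\geq0$ --- is also precisely the paper's computation (up to normalization constants in $|F|^{2}$ and $|\overset{\circ}{\eta}|^{2}$ about which the paper itself is not internally consistent), and like the paper it lands on $2\chi+3\tau\geq0$.

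The divergence is only in your final step, and there you have in effect uncovered a defect of the statement rather than a gap you are expected to fill: the paper's own proof of (2) opens its displayed chain with $2\chi(M)+3\tau(M)=\cdots$, ends with $\geq0$, and stops --- it never derives the ``$2\chi+\tau$'' appearing in the statement, so the ``$+\tau$'' is best read as a typo for ``$+3\tau$'' and no supplementary argument is intended. Your proposed bridge for the case $\tau>0$ does not work as written. A compact K\"ahler surface carrying a constant-scalar-curvature metric with $R>0$ has $c_{1}\cdot[\omega]>0$ and hence Kodaira dimension $-\infty$, i.e.\ it is rational or ruled; a ruled surface over a curve of genus $g\geq2$ has $\chi=4(1-g)<0$, so the implication ``Kodaira dimension $\leq 0\Rightarrow\chi=c_{2}\geq0$'' is false. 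To salvage that route you would have to show that the full hypothesis $f=R-|F|^{2}\geq0$ --- which pointwise forces $R\geq|F|^{2}\geq 2|F^{+}||F^{-}|=2|\overset{\circ}{\eta}|$ and does exclude the obvious product metrics on $\Sigma_{g}\times S^{2}$ --- rules out \emph{all} irrational ruled examples, and that is a nontrivial claim you have not established. The safer conclusion is that the proof (yours and the paper's) establishes $2\chi+3\tau\geq0$, and $2\chi+\tau\geq0$ should either be corrected or proved by a genuinely different argument.
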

\begin{proof}
(1).This is clear since if $b_{+}=0$ or $b_{-}=0$, $F^{+}=0$ or $F^{-}=0$, the above lemma tells us that
$\overset{\circ}{\mathrm{Ric}}=\overset{\circ}{\eta}=0$.

(2).Since $(M,g)$ is a K\"{a}hler surface, thus $|W^{+}|^{2}=\frac{R^{2}}{24}$. From the Gauss-Bonnet-Chern formula
and Hirzebruch signature formula, we have
\begin{eqnarray*}
& &2\chi \left( M\right) +3\tau \left( M\right) \\
&=&\frac{1}{4\pi ^{2}}\int_{M}\left( \frac{R^{2}}{24}+2\left\vert
W^{+}\right\vert ^{2}-\frac{1}{2}\left\vert \overset{\circ }{\mathrm{Ric}}\right\vert
^{2}\right) \mathrm{d}\mu \\
&=&\frac{1}{8\pi ^{2}}\int_{M}\left( \frac{R^{2}}{4}-\left\vert \overset{%
\circ }{\mathrm{Ric}}\right\vert ^{2}\right)\mathrm{d}\mu \\
&=&\frac{1}{8\pi ^{2}}\int_{M}\left( \frac{1}{4}\left( \frac{1}{2}\left\vert
F\right\vert ^{2}+4f \right) ^{2}-\frac{1}{4}\left\vert \overset{\circ }%
{\eta }\right\vert ^{2}\right)\mathrm{d}\mu \\
&=&\frac{1}{32\pi ^{2}}\int_{M}\left( \left( \frac{1}{2}\left\vert
F\right\vert ^{2}+4f \right) ^{2}-\left\vert F^{+}\right\vert
^{2}\left\vert F^{-}\right\vert ^{2}\right) \mathrm{d}\mu\\
&\geq&\frac{1}{32\pi ^{2}}\int_{M}\left( 2f|F|^{2}+16f^{2}\right) \mathrm{d}\mu\\
&\geq&0.
\end{eqnarray*}%
\end{proof}

As we have shown that the Einstein-Maxwell equations on a 4-manifold imply the scalar
curvature is constant. On the converse, a remarkable observation by C.LeBrun (cf.\cite{Le10})
asserts that any K\"{a}hler metric with constant scalar curvature
on a K\"{a}hler surface can be interpreted as a solution of the Einstein-Maxwell equations.

\begin{proposition}\label{prop:1}(LeBrun \cite{Le10})
Let $(M,g,J)$ be a K\"{a}hler surface with K\"{a}hler form
 $\omega=g(J\cdot,\cdot)$ and Ricci form $\rho=Ric(J\cdot,\cdot)$. Suppose the scalar curvature $R$ is constant.
Set $$\overset{\circ }{\rho}=\rho-\frac{R}{4}\omega$$
and $$F_{a}=a\omega+\frac{1}{2a}\overset{\circ }{\rho}$$
for any constant $a>0$.
Then $(g,F_{a})$ solves the Einstein-Maxwell equations.
\end{proposition}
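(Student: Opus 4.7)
The plan is to verify the three Einstein--Maxwell equations from (\ref{eq:1.1}) for the pair $(g,F_a)$ separately. The two Maxwell equations $\mathrm{d}F_a=0$ and $\mathrm{d}^{\ast}F_a=0$ will drop out of standard K\"{a}hler identities together with the hypothesis that $R$ is constant, once I make the self-dual/anti-self-dual decomposition of $F_a$ explicit. The trace-free Einstein equation is the substantive step, and for it I would feed $F=F_a$ into Lemma \ref{lem:2.2} and reduce the whole question to a purely algebraic identity relating $\omega$, $\overset{\circ}{\rho}$ and $\overset{\circ}{\mathrm{Ric}}$.

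For the Maxwell side I would first recall that on any K\"{a}hler manifold $\omega$ is parallel and $\rho$ is closed (via the second Bianchi identity). Because $R$ is constant,
\[
\mathrm{d}\overset{\circ}{\rho}=\mathrm{d}\rho-\tfrac{1}{4}(\mathrm{d}R)\wedge\omega-\tfrac{R}{4}\,\mathrm{d}\omega=0,
\]
whence $\mathrm{d}F_a=0$. Next, the standard identity $\rho\wedge\omega=\frac{R}{4}\omega^{2}$ on a K\"{a}hler surface gives $\overset{\circ}{\rho}\wedge\omega=0$, which is precisely primitivity of $\overset{\circ}{\rho}$ as a real $(1,1)$-form; on a K\"{a}hler surface every primitive real $(1,1)$-form is anti-self-dual, while $\omega$ itself is self-dual. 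Hence
\[
F_{a}^{+}=a\omega,\qquad F_{a}^{-}=\tfrac{1}{2a}\overset{\circ}{\rho}.
\]
Each summand is closed ($\mathrm{d}\omega=0$ and $\mathrm{d}\overset{\circ}{\rho}=0$), and a closed (anti-)self-dual $2$-form on an oriented $4$-manifold is automatically co-closed, so $\mathrm{d}^{\ast}F_{a}=0$.

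For the curvature equation, Lemma \ref{lem:2.2} applied to $F=F_{a}$ gives
\[
\overset{\circ}{\left[F_{a}\circ F_{a}\right]}=2\,F_{a}^{+}\circ F_{a}^{-}=\omega\circ\overset{\circ}{\rho},
\]
so it suffices to prove the pointwise identity $\omega\circ\overset{\circ}{\rho}=-\overset{\circ}{\mathrm{Ric}}$. To establish this I would view $2$-forms and symmetric $2$-tensors as endomorphisms via the metric: $\omega$ corresponds to the complex structure $J$; the Ricci form $\rho(X,Y)=\mathrm{Ric}(JX,Y)$ corresponds (after passing to trace-free parts) to the endomorphism $\overset{\circ}{\mathrm{Ric}}\circ J$; and $\overset{\circ}{\mathrm{Ric}}$ commutes with $J$ because it is $J$-invariant as a symmetric tensor on a K\"{a}hler manifold. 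Composing endomorphisms then yields $\omega\circ\overset{\circ}{\rho}\leftrightarrow J\circ(\overset{\circ}{\mathrm{Ric}}\circ J)=\overset{\circ}{\mathrm{Ric}}\circ J^{2}=-\overset{\circ}{\mathrm{Ric}}$, as required.

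The main obstacle I anticipate is the sign- and convention-bookkeeping in this last step: one must reconcile the precise definition of the product $\circ$ on $2$-forms implicit in Lemma \ref{lem:2.2} with the endomorphism interpretation, tracking the signs coming from $J^{2}=-I$ and the $g$-skew-symmetry of $J$. In practice this is safest to check in an orthonormal frame of the form $\{e_{1},Je_{1},e_{2},Je_{2}\}$ in which both $\omega$ and $\overset{\circ}{\rho}$ are simultaneously block-diagonalized, so that the identity can be read off at a point and then extended by tensoriality.
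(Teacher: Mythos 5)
Your proposal is correct and follows essentially the same route as the paper's proof: identify $F_{a}^{+}=a\omega$ and $F_{a}^{-}=\frac{1}{2a}\overset{\circ}{\rho}$, feed this into Lemma \ref{lem:2.2} for the trace-free Einstein equation, and check the Maxwell equations from closedness of $\omega$ and $\rho$ with $R$ constant. The only (harmless) deviations are that you obtain $\mathrm{d}^{\ast}F_{a}=0$ from closedness plus (anti-)self-duality where the paper invokes $\mathrm{d}^{\ast}\rho=0$ via the second Bianchi identity, and that you actually supply the $J$-conjugation argument for $-\omega\circ\overset{\circ}{\rho}=\overset{\circ}{\mathrm{Ric}}$, a pointwise identity the paper asserts without detail.
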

\begin{proof}
In this special case, $F_{a}^{+}=a\omega$ and $F_{a}^{-}=\frac{1}{2a}\overset{\circ}{\rho}$.
As we have shown that
$$
\begin{array}{c}
\overset{\circ}{\eta}=-2F_{a}^{+}\circ F_{a}^{-}\\
=\overset {\circ}{\mathrm{Ric}}.
\end{array}
$$

On the other hand, since the metric has constant scalar curvature, the second Bianchi identity implies
$$\mathrm{d}^{*}\rho=0.$$
Henceforth
$$\mathrm{d}^{*}\overset{\circ}{\rho}=\mathrm{d}^{*}(\rho-\frac{R}{4}\omega)=0,$$
and
$$\mathrm{d}(\rho -\frac{R}{4}\omega)=0.$$
Then
$$\mathrm{d}F_{a}=\mathrm{d}(a\omega+\frac{1}{2a}\overset{\circ}{\rho})=0,$$
and
$$\mathrm{d}^{\ast}F_{a}=\mathrm{d}^{\ast}a\omega+\frac{1}{2a}\mathrm{d}^{*}\overset{\circ}{\rho}=0.$$
Thus we have proved that $(g,F_{a})$ is a solution to Einstein-Maxwell equations.
\end{proof}

Given tensors $\xi $ and $\zeta ,$ $\xi \ast \zeta $ denotes some
linear combination of contractions of $\xi \otimes \zeta $ in this section.
\begin{lemma}(cf.Lemma $2.3$ in \cite{Sh13})
\label{lem:2.3} Let $(M,g)$ be a Riemannian manifold. Suppose $F$ is a $2$-form
on $M$ such that $(g,F)$ is a solution to the Einstein-Maxwell equations (\ref{eq:2.1}).
Let $u:U\rightarrow \mathbb{R}^{n}$ be a harmonic coordinate of the
underlying manifold $M.$ Then in this coordinate, $g$ and $F$ satisfies
\begin{equation}\label{eq:2.2}
-\frac{1}{2}g^{kl}\frac{\partial ^{2}g_{ij}}{\partial u^{k}\partial u^{l}}%
-Q_{ij}\left( g,\partial g\right)
-\frac{1}{2}g^{kl}F_{ik}F_{jl}-f g_{ij}=0
\end{equation}%
\begin{equation}\label{eq:2.3}
g^{kl}\frac{\partial ^{2}F_{ij}}{\partial u^{k}\partial u^{l}}%
+P_{ij}(g,\partial g,\partial F)+T_{ij}(g,\partial g,F)=0
\end{equation}%
where
\begin{equation*}
Q\left( g,\partial g\right) =\left( g^{-1}\right) ^{\ast 2}\ast
\left(
\partial g\right) ^{\ast 2},
\end{equation*}%
\begin{equation*}
P(g,\partial g,\partial F)=\left( g^{-1}\right) ^{\ast 2}\ast
\partial g\ast
\partial F,
\end{equation*}%
and%
\begin{equation*}
T(g,\partial g,\partial ^{2}g,F)=\left( g^{-1}\right) ^{\ast 3}\ast
\left(
\partial g\right) ^{\ast 2}\ast F+\left( g^{-2}\right) ^{\ast 2}\ast
\partial ^{2}g\ast F.
\end{equation*}
\end{lemma}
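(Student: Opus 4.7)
The plan is to prove (\ref{eq:2.2}) and (\ref{eq:2.3}) separately, with (\ref{eq:2.2}) reducing to a standard harmonic-coordinate identity and (\ref{eq:2.3}) reducing to the Weitzenb\"{o}ck formula.

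For (\ref{eq:2.2}), I would invoke the well-known fact that in harmonic coordinates the Ricci tensor admits the expression
$R_{ij} = -\tfrac{1}{2} g^{kl} \tfrac{\partial^{2} g_{ij}}{\partial u^{k} \partial u^{l}} + Q_{ij}(g,\partial g),$
where $Q$ is a linear combination of contractions of schematic type $(g^{-1})^{\ast 2} \ast (\partial g)^{\ast 2}$; this is the identity underlying the standard $C^{1,\alpha}$-regularity theory for Einstein metrics. Substituting it into the first Einstein-Maxwell equation $R_{ij} - \eta_{ij} = f g_{ij}$ and using the algebraic form of $\eta$ in terms of the contraction $g^{kl} F_{ik} F_{jl}$ (as in the local computation already carried out in the proof of Lemma \ref{lem:2.2}) produces (\ref{eq:2.2}) directly.

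For (\ref{eq:2.3}), I would apply the Weitzenb\"{o}ck formula for a $2$-form, $\Delta_{\mathrm{d}} F = \nabla^{\ast} \nabla F + \mathcal{R}(F)$, where $\mathcal{R}(F)$ is an algebraic curvature operator linear in the Riemann tensor and in $F$. Since $\Delta_{\mathrm{d}} F = 0$, this yields $\nabla^{\ast} \nabla F = -\mathcal{R}(F)$. I would then expand $\nabla^{\ast} \nabla F$ in the harmonic coordinates: its principal symbol is $-g^{kl} \partial_{k} \partial_{l} F_{ij}$, while the subleading Christoffel contributions split schematically into a piece of type $(g^{-1})^{\ast 2} \ast \partial g \ast \partial F$ (which lands in $P$) and a piece of type $(g^{-1})^{\ast 3} \ast (\partial g)^{\ast 2} \ast F$ (which lands in $T$). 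Finally, expanding the Riemann tensor appearing in $\mathcal{R}(F)$ via the schematic identity $\mathrm{Rm} \sim (g^{-1}) \ast \partial^{2} g + (g^{-1})^{\ast 2} \ast (\partial g)^{\ast 2}$ contributes exactly the two remaining pieces appearing in $T$, namely $(g^{-2})^{\ast 2} \ast \partial^{2} g \ast F$ and an additional $(g^{-1})^{\ast 3} \ast (\partial g)^{\ast 2} \ast F$ contribution. Rearranging gives (\ref{eq:2.3}).

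The main obstacle is organizational rather than conceptual: one has to verify carefully that every term produced by expanding Christoffels and curvature has the claimed schematic tensor type, and in particular that the second derivative of the metric, entering $\mathcal{R}(F)$ through the Riemann tensor, lands in $T$ with the stated coefficient structure. Beyond this bookkeeping the derivation is entirely routine once the Weitzenb\"{o}ck identity and the harmonic-coordinate Ricci formula are in hand, and it does not require invoking Lemma \ref{lem:2.1} or any structural feature of Einstein-Maxwell systems beyond the equations (\ref{eq:2.1}) themselves.
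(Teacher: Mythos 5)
The paper offers no proof of this lemma at all --- it is simply quoted from Lemma 2.3 of \cite{Sh13} --- so there is nothing to compare line by line; your argument is the standard one for such statements (harmonic-coordinate expression of the Ricci tensor for (\ref{eq:2.2}), Weitzenb\"ock formula plus Christoffel/curvature bookkeeping for (\ref{eq:2.3})) and is correct as a derivation of the schematic identities claimed. The only point worth flagging is a normalization wrinkle in the paper itself: the proof of Lemma \ref{lem:2.1} uses $\eta_{ij}=g^{kl}F_{ik}F_{jl}$ with no factor of $\tfrac12$, whereas (\ref{eq:2.2}) carries a $\tfrac12$ in front of $g^{kl}F_{ik}F_{jl}$; your substitution of $\eta$ into $R_{ij}-\eta_{ij}=fg_{ij}$ reproduces the equation up to this constant, which is immaterial for the intended application but means you should fix one convention for $F\circ F$ and state it.
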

\bigskip
\section{Convergence of K\"{a}hler surfaces with constant scalar curvature}
In this section we are going to prove a convergence theorem for
K\"{a}hler surfaces with constant scalar curvature.
As we have shown that every K\"{a}hler surface with constant scalar curvature
can be considered as a solution to the Einstein-Maxwell equations. Then we can
use elliptic estimates to the Maxwell equations to obtain the regularity of the metric.

Before we prove the convergence theorem, we shall review two propositions firstly.
\begin{proposition}
\label{prop:3.1}
(Proposition 2.5, Lemma 2.2 and Remarks 2.3
in \cite{An90})
 Let $(M,g)$ be a Riemannian manifold with
\begin{equation*}
\left\vert \mathrm{Ric}_{M}\right\vert \leq \Lambda ,\ \ \mathrm{diam}%
_{M}\leq D\ \ and\ \ \mathrm{Vol}_{B\left( r\right) }\geq v_{0}>0
\end{equation*}%
for a ball $B\left( r\right) $.  Then, for any $C>1$, there exist
positive
constants $ \sigma=\sigma \left( \Lambda ,v_{0},n,D \right)$, $\epsilon =\epsilon \left( \Lambda ,v_{0},n,C\right) $ and $%
\delta =\delta \left( \Lambda ,v_{0},n,C\right) ,$ such that for any $%
1<p<\infty ,$ one can obtain a $\left( \delta ,\sigma
,W^{2,p}\right) $ adapted atlas on the union $U$ of those balls
$B\left( r\right) $ satisfying
\begin{equation*}
\int_{B\left( 4r\right) }\left\vert \mathrm{Rm}\right\vert ^{\frac{n}{2}%
}d\mu \leq \epsilon .
\end{equation*}%
More precisely, on any $B\left( 10\delta \right) \subset U$, there
is a
harmonic coordinate chart such that for any $1<p<\infty ,$%
\begin{equation*}
C^{-1}\delta _{ij}\leq g_{ij}\leq C\delta _{ij},
\end{equation*}%
and%
\begin{equation*}
\left\Vert g_{ij}\right\Vert _{W^{2,p}}\leq C.
\end{equation*}
\end{proposition}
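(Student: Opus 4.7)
The plan is to combine an $\epsilon$-regularity theorem for the curvature tensor with a contradiction/rescaling argument for the harmonic radius, and finally to apply elliptic regularity in harmonic coordinates to the Ricci equation. The central observation is that under $|\mathrm{Ric}| \leq \Lambda$ and a lower bound on volume, the hypothesis $\int_{B(4r)} |\mathrm{Rm}|^{n/2} \leq \epsilon$ is strong enough to yield pointwise control on $\mathrm{Rm}$, after which one bootstraps to $W^{2,p}$ control on the metric. Concretely, the second Bianchi identity and the bound on $|\mathrm{Ric}|$ give a Simons-type differential inequality $\Delta |\mathrm{Rm}| \geq -c|\mathrm{Rm}|^{2} - c'|\mathrm{Rm}|$, and the Sobolev inequality on $B(2r)$, whose constant is controlled by $v_{0}$ and $\Lambda$ via Bishop-Gromov, enables Moser iteration; starting from the small $L^{n/2}$ hypothesis one concludes a pointwise bound $|\mathrm{Rm}| \leq C(\Lambda, v_{0}) r^{-2}$ on $B(r)$.

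Next I would derive a uniform lower bound $\delta = \delta(\Lambda, v_{0}, n, C)$ on the harmonic radius at every point of $U$. The argument is by contradiction: if no such $\delta$ exists, choose a sequence of pointed manifolds $(M_{i}, g_{i}, p_{i})$ with harmonic radius $r_{i} \to 0$ at $p_{i}$, and rescale by $r_{i}^{-2}$ to normalize the harmonic radius at $p_{i}$ to one. The rescaled curvature still satisfies the pointwise bound from the first step (which now tends to zero after rescaling), while non-collapsing is preserved, so a subsequence converges in $C^{1,\alpha}$ to a complete pointed limit. The limiting Ricci and Riemann tensors both vanish, so the limit is flat Euclidean space, on which the harmonic radius is infinite, contradicting the normalization.

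With $\delta$ in hand, a standard Vitali covering by harmonic balls of radius $\delta/2$ produces the $\sigma$-bounded overlap required for a $(\delta, \sigma, W^{2,p})$-adapted atlas. Inside each such chart the metric satisfies the quasilinear elliptic equation $-\tfrac{1}{2} g^{kl} \partial_{k}\partial_{l} g_{ij} + Q(g, \partial g) = -R_{ij}$, which is \eqref{eq:2.2} with the $F$-term omitted. Since the harmonic-radius definition provides $C^{1,\alpha}$ control together with $C^{-1}\delta_{ij} \leq g_{ij} \leq C\delta_{ij}$, and the right-hand side is bounded in $L^{\infty}$ by $|\mathrm{Ric}| \leq \Lambda$, Calder\'{o}n-Zygmund interior $L^{p}$ theory upgrades $g_{ij}$ to $W^{2,p}$ for every $p < \infty$, with constants depending only on $\Lambda$, $v_{0}$, $n$, and $C$.

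The main obstacle is the harmonic radius lower bound. Without a full sectional curvature bound, even the pointwise control on $|\mathrm{Rm}|$ coming from the $\epsilon$-regularity step is delicate, and the rescaling/compactness argument then requires the full weak $C^{1,\alpha}$-compactness for metrics with bounded Ricci, small $L^{n/2}$ curvature, and non-collapsed volume, which is the technical heart of Anderson's approach in \cite{An90}. Everything else (Moser iteration, Vitali covering, elliptic bootstrap) is fairly standard once this geometric input is established.
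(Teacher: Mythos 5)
The paper gives no proof of this proposition at all --- it is quoted verbatim from Anderson \cite{An90} --- so your sketch has to be measured against Anderson's argument rather than against anything in the text. Your overall architecture (a harmonic-radius lower bound obtained by contradiction and blow-up, a Vitali covering to get the bounded-multiplicity $\sigma$ of the adapted atlas, and Calder\'on--Zygmund theory applied to $-\tfrac{1}{2}g^{kl}\partial_k\partial_l g_{ij}+Q(g,\partial g)=-R_{ij}$ in harmonic coordinates) is exactly Anderson's, and the covering and bootstrap steps are fine as you describe them.

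The gap is in your first step. The Weitzenb\"ock identity underlying the Simons inequality is $\Delta\,\mathrm{Rm}=\nabla^{2}\mathrm{Ric}+\mathrm{Rm}\ast\mathrm{Rm}$, so an $L^{\infty}$ bound on $\mathrm{Ric}$ alone does \emph{not} yield $\Delta|\mathrm{Rm}|\geq -c|\mathrm{Rm}|^{2}-c'|\mathrm{Rm}|$: the term $\nabla^{2}\mathrm{Ric}$ is completely uncontrolled under the hypotheses, and no pointwise $\epsilon$-regularity bound $|\mathrm{Rm}|\leq C r^{-2}$ is available on the original manifold. Consequently your blow-up step cannot appeal to a vanishing pointwise curvature bound to identify the limit as flat. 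Anderson's actual route sidesteps this: the $L^{n/2}$ norm of $\mathrm{Rm}$ is scale-invariant, so after rescaling by the harmonic radius it is still at most $\epsilon$, while the rescaled Ricci tensor tends to zero in $L^{\infty}$; the weak $W^{2,p}$ limit is therefore a complete \emph{Ricci-flat} manifold with Euclidean volume growth and $\int|\mathrm{Rm}|^{n/2}\leq\epsilon$, and only at this stage --- where $\nabla^{2}\mathrm{Ric}=0$ and the Simons inequality is legitimate --- does Moser iteration (a gap theorem, with Sobolev constant controlled by $v_{0}$ and $\Lambda$) force the limit to be flat $\mathbb{R}^{n}$, contradicting the normalization of the harmonic radius to $1$. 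So the Moser-iteration idea is correct but must be run on the Ricci-flat blow-up limit, not on $(M,g)$ itself; as written, your step one would fail.
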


A local version of the Cheeger-Gromov compactness theorem
(cf. Theorem 2.2 in \cite{An89}, Lemma 2.1 in \cite{An90} and
\cite{GW88}) is the following.
\begin{proposition}
\label{prop:3.2}\bigskip Let $V_{i}$ be a sequence of domains in closed $%
C^{\infty }$ Riemannian manifolds $(M_{i},g_{i})$ such that $V_{i}$
admits an adapted harmonic atlas $\left( \delta ,\sigma ,C^{l,\alpha
}\right) $ for a constant $C>1$. Then there is a subsequence which
converges uniformly on compact subsets in
the $C^{l,\alpha^{\prime }}$ topology, $\alpha^{\prime }<\alpha$, to a $%
C^{l,\alpha }$ Riemannian manifold $V_{\infty }.$
\end{proposition}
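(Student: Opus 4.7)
The plan is to combine the uniform regularity built into the adapted harmonic atlas with the compactness of $C^{l,\alpha}$-bounded families in the weaker H\"older topology $C^{l,\alpha'}$, and then to patch the local limits together using the bounded overlap structure of the atlas.

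First, I would fix a countable exhaustion of a prospective limit by compact subsets, realized as unions of charts from the adapted atlases. In each chart on $V_i$, the hypothesis gives metric components $g^{(i)}_{ij}$ satisfying the two-sided bound $C^{-1}\delta_{ij}\leq g^{(i)}_{ij}\leq C\delta_{ij}$ and a uniform $C^{l,\alpha}$ bound. Since the inclusion $C^{l,\alpha}\hookrightarrow C^{l,\alpha'}$ is compact for $\alpha'<\alpha$, Arzel\`a--Ascoli produces a subsequence along which the $g^{(i)}_{ij}$ converge in $C^{l,\alpha'}$ on each fixed chart to a limit $g^{(\infty)}_{ij}$; by lower semicontinuity of the $C^{l,\alpha}$ norm under $C^{l,\alpha'}$ convergence, the limit still lies in $C^{l,\alpha}$ and retains the ellipticity bound.

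Next, I would deal with the transition functions, which is where the work lies. Because the atlas parameters $(\delta,\sigma)$ are fixed, the number of charts meeting any compact set is uniformly bounded in $i$, and on overlaps the change-of-coordinate maps are compositions of harmonic coordinates; these coordinates satisfy an elliptic equation whose coefficients are controlled by $g^{(i)}_{ij}$, so standard elliptic regularity upgrades them to $C^{l+1,\alpha}$ with uniform bounds. A diagonal extraction across the countable family of charts then produces simultaneous $C^{l,\alpha'}$ convergence of all metric components together with $C^{l+1,\alpha'}$ convergence of all transition maps. This yields a topological space $V_\infty$ with a compatible atlas whose transitions are $C^{l+1,\alpha}$, defining a genuine differentiable structure, on which the patched limit $g_\infty$ is a $C^{l,\alpha}$ Riemannian metric.

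Finally, one assembles the approximating maps $V_i\to V_\infty$ chart by chart using the converging coordinates and verifies $C^{l,\alpha'}$ convergence on compact subsets; given the local statement and the uniform multiplicity of the cover, this is a partition-of-unity bookkeeping step. I expect the main obstacle to be precisely this bookkeeping for the transitions: ensuring that the limiting coordinate changes are honest homeomorphisms with matching chart domains, so that $V_\infty$ carries a well-defined manifold structure rather than merely a pointwise limit of chart data. Once that is in hand, the rest is Arzel\`a--Ascoli and a diagonal argument, with no new analytic input beyond the regularity already encoded in the adapted atlas hypothesis.
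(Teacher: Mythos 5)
The paper offers no proof of this proposition: it is quoted directly from the literature (Theorem 2.2 in Anderson's 1989 paper, Lemma 2.1 in his 1990 paper, and Greene--Wu), so there is nothing internal to compare your argument against. Your sketch is, in outline, exactly the standard argument those references use: uniform $C^{l,\alpha}$ bounds on the metric components in harmonic charts plus the compact embedding $C^{l,\alpha}\hookrightarrow C^{l,\alpha'}$ give local subconvergence via Arzel\`a--Ascoli, elliptic regularity for the harmonic transition functions (which in fact gain two derivatives, landing in $C^{l+2,\alpha}$, since they solve $g^{ij}\partial_i\partial_j u=0$ with $C^{l,\alpha}$ coefficients) controls the coordinate changes, and a diagonal extraction assembles the limit manifold. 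You correctly identify the only genuinely delicate point, namely verifying that the limiting chart data define an honest manifold with diffeomorphic transition maps and that the embeddings $V_i\to V_\infty$ can be built to realize the convergence; that is precisely the content of the Greene--Wu/Peters construction that the cited sources carry out in detail.
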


Now we are ready to prove the compactness theorem \ref{thm:1}.
\begin{proof}[proof of Theorem \ref{thm:1}]
As we discussed in section 2, for any K\"{a}hler surface with constant scalar curvature,
a lower bound of the signature $\tau(M_{i})$ gives an upper bound for the $L^{2}$-norm of the Weyl
tensors. This together with the condition $|Ric_{g_{i}}|\leq \Lambda$ and $\mathrm{diam}_{M_{i}}\leq D$
gives us a bound of $L^{2}$-norm for the curvature tensor. In fact, using Bishop volume
comparison theorem (cf.\cite{CLN06} for example), we get an upper bound for the volume of $M_{i}$,
$$\mathrm{Vol}_{M_{i}}\leq \mathrm{Vol}_{\frac{-\Lambda}{3}}(B(D)).$$
Here $\mathrm{Vol}_{\frac{-\Lambda}{3}}(B(D))$ is the volume of ball of radius $D$ in the space form
of constant curvature $\frac{-\Lambda}{3}$.
Then
\begin{eqnarray*}
\int_{M_{i}}\left(|\mathrm{Rm}(g_{i})|^{2}\right)\mathrm{d}\mu_{i}&=&\int_{M_{i}}\left(\frac{R^{2}}{6}+2|\overset{\circ}{\mathrm{Ric}}(g_{i})|^{2}+4|W(g_{i})|^{2}\right)\mathrm{d}\mu_{i}\\
&=&\int_{M{i}}\left(\frac{R^{2}}{6}+2|\overset{\circ}{\mathrm{Ric}}(g_{i})|^{2}+8|W_{+}(g_{i})|^{2}\right)\mathrm{d}\mu_{i}-4\tau(M_{i})\\
&=&\int_{M{i}}\left(\frac{R^{2}}{6}+2|\overset{\circ}{\mathrm{Ric}}(g_{i})|^{2}+\frac{2R^{2}}{3}\right)\mathrm{d}\mu_{i}-4\tau(M_{i})\\
&\doteq&C_{1}(\Lambda, D,C).
\end{eqnarray*}
From Proposition \ref{prop:3.1} and Proposition \ref{prop:3.2}, Theorem 2.6 in \cite{An90},
there is a subsequence of $(M_{i},g_{i})$ converges to a Riemannian orbifold
$(M_{\infty},g_{\infty})$ with finite isolated singular points in the Gromov-Hausdorff
sense. Furthermore, $g_{i}$ converge to $g_{\infty}$ in $C^{1,\alpha}$ topology on the
regular part in the Cheeger-Gromov sense.
Similar as the proof of Theorem 1.1 in \cite{Sh13}, we will use the Einstein-Maxwell equations
under the harmonic coordinates to get the regularity of the metric.

Set $F_{i}=\omega_{i}+\frac{1}{2}\overset{\circ}{\rho_{i}}$, where $\omega_{i}$
is the K\"{a}hler form corresponding to $g_{i}$, then $(g_{i},F_{i})$
satisfies the Einstein-Maxwell equations,
$$\left\{
\begin{array}{c}
\mathrm{Ric}(g_{i})-\eta(g_{i},F_{i})=f_{i}g_{i}\\
\Delta_{d}F_{i}=0,
\end{array}
\right.$$
where
\begin{eqnarray*}
f_{i}&=&\frac{1}{n}(R_{i}-|F_{i}|^{2})\\
&=&R_{i}-(4-\frac{1}{4}|\overset{\circ}{Ric(g_{i})}|^{2}),
\end{eqnarray*}
which is uniformly bounded since the Ricci curvature of $g_{i}$
are uniformly bounded.

Similar as the proof of Theorem 1.1 in \cite{Sh13}, for a given $r>0,$ let $\{B_{x_{k}}^{i}(r)\}$ be a family of metric
balls of
radius $r$ such that $\{B_{x_{k}}^{i}(r)\}$ covers $(M_{i},g_{i})$, and   $B_{x_{k}}^{i}(\frac{r}{2%
})$ are disjoint. Denote%
\begin{equation*}
G_{i}(r)=\cup \left\{ \left. B_{x_{k}}^{i}(r)\right\vert
\int_{B_{x_{k}}^{i}(4r)}\left\vert \mathrm{Rm}\left( g_{i}\right)
\right\vert ^{2}d\mu _{i}\leq \epsilon \right\} ,
\end{equation*}%
where $\epsilon =\epsilon \left( \Omega ,v,D,n,C_{0}\right) >0$ is
obtained in Proposition 2.4 in \cite{Sh13} for a constant $C_{0}>1$. So
$G_{i}(r)$ are covered by a  $\left( \delta ,\sigma ,W^{2,p}\right)
$ $\left( \text{for any }1<p<\infty \right) $ adapted atlas with the
harmonic radius uniformly bounded from below. In these coordinates we have $W^{2,p}$
bounds for the metrics, i.e.
\begin{equation*}
C_{3}^{-1}\delta _{jk}\leq g_{i,jk}\leq C_{3}\delta _{jk},
\end{equation*}%
and%
\begin{equation*}
\left\Vert g_{i}\right\Vert _{W^{2,p}}\leq C_{3}.
\end{equation*}%
And then follows
$$|\mathrm{Rm}(g_{i})|_{L^{p}}\leq C$$
for any $1<p<\infty$.

And the Sobolev embedding theorem tell us that the $C^{1,\alpha}$-norm
of $g_{i}$ is uniformly bounded for all $0<\alpha \leq 1-\frac{n}{2p}$.

On the other hand, since $|\mathrm{Ric}_{g_{i}}|\leq C$
and $\mathrm{diam}_{M_{i}}\leq D$, from the volume comparison theorem
we know that the volume of $M_{i}$ is uniformly bounded.
Since here $F_{i}=\omega_{i}+\frac{1}{2}\overset{\circ}{\rho_{i}}$,
then it follows
$$
\|F_{i}\|_{L^{2p}}=(\int_{M_{i}}(4+\frac{1}{4}|\overset{\circ}{\mathrm{Ric}(g_{i})}|^{2})^{p}\mathrm{d}\mu_{g_{i}})^{\frac{1}{p}}\leq C,
$$
for some constant $C$.

By applying $L^{p}$-esimates for the elliptic differential equations (cf.\cite{GT77}) to (\ref{eq:2.3})
\begin{equation*}
\| F_{i}\| _{W^{2,2p}}\leq C,
\end{equation*}
Then
$$\|F_{i}\|_{C^{1,\alpha}}\leq C,$$
by the soblev embedding again.

From
$$f_{i}=\frac{1}{4}(R(g_{i}+|F_{i}|^{2})),$$
and the fact $R(g_{i})$ is constant, we know that
$$\|f_{i}\|_{C^{1,\alpha}}\leq C.$$
Now use the Schauder estimate for elliptic differential equations to (\ref{eq:2.1})
we can obtain
\begin{equation*}
\left\Vert g_{i}\right\Vert _{C^{2,\alpha }}\leq C(n,\left\Vert
g_{i}\right\Vert _{C^{1,\alpha }},\left\Vert F_{i}\right\Vert
_{C^{\alpha }}^{2})\leq C,
\end{equation*}
Then, standard elliptic theory
implies all the covariant derivatives of the curvature tensor have uniform bounds.
By Proposition \ref{prop:3.2} there is a subsequence of $G_{i}(r)$ converges in the
$C^{\infty}$ topology to an open manifold $G_{r}$ with a smooth metric $g_{r}$ which is
K\"{a}hler.

The rest of the proof is same with Theorem 1.1 in \cite{Sh13}.

For any compact subset
$K\subset \subset M_{\infty }^{0}$, there are embeddings $\Phi
_{K}^{i}:K\rightarrow M_{i}$ such that $\Phi
_{K}^{i,\ast}\circ J_{i}\circ \Phi_{K,\ast}^{i}
\rightarrow J_{\infty}%
$ for $i\gg 1$, and
\begin{equation*}
\Phi _{K}^{i,\ast }g_{i}\rightarrow g_{\infty },\ \ \ \Phi
_{K}^{i,\ast }F_{i}\rightarrow F_{\infty },
\end{equation*}%
when $i\rightarrow \infty $ in the $C^{\infty }$-sense.
\end{proof}

\begin{remark}
In particular, if all the anti-self-dual weyl tensors are uniformly bounded,
i.e. $|W^{-}|\leq C$ for some constant $C$, then the limit space is smooth and K\"{a}hler.
\end{remark}
\bigskip
\section{A splitting theorem for Einstein-Maxwell systems}
At first we mention that $\eta=-F\circ F$ is nonnegativly definite. If the function $f$ in the
Einstein-Maxwell system is lower bounded by a positive constant $C$, then $\mathrm{Ric}=\eta+fg\geq C>0$,
thus $M$ is compact and has finite fundamental group.

Now we try to examine the case of $R-|F|^{2}=0$. In this situation, $\mathrm{Ric}=\eta\geq0$.
Thanks to B\"{o}hm and Wilking's work on nonnegatively curved  manifolds (cf. \cite{BW07}), we obtain
a splitting theorem for the Einstein-Maxwell systems with nonnegative sectional curvature. Although
the Ricci Yang-Mills is a natural geometric flow related to the Einstein-Maxwell systems (cf.\cite{St07},\cite{Yo08}),
we shall use the Ricci flow instead of Ricci Yang-Mills flow since the Einstein-Maxwell system with $f=0$ is
static under the Ricci Yang-Mills flow and we get nothing.

Recall the Ricci flow is a system which evolves the metrics under their Ricci direction, i.e.
$$\frac{\partial}{\partial t}g_{ij}=-2R_{ij}.$$
It is now well known that the nonnegativity of Ricci curvature is non preserved under Ricci flow (cf.\cite{BW07},\cite{Kn06},\cite{Ma11}).
Using Hamilton's maximum principal (see \cite{Ha86},\cite{CC08},\cite{CL04}), B\"{o}hm and Wilking constructed an invariant subset
and obtain that the nonnegativity of Ricci curvature is preserved in a short
 time interval if the initial manifold is compact and with nonnegative sectional curvature.
\begin{lemma}[B\"{o}hm and Wilking \cite{BW07} Proposition 2.1]\label{lem:4.1}
Suppose $(M,g_{0})$ is a compact nonnegative curved n-manifold with the Riemannian curvature
bounded by $C>0$. Then there is a constant $\epsilon$, such that the solution $g(t)$ of the Ricci flow
with initial metric $g_{0}$ exists on $[0,\epsilon]$ and $\mathrm{Ric(g(t))}\geq0$ for all $t\in[0,\epsilon]$.
\end{lemma}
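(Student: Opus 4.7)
The plan is to apply Hamilton's maximum principle for tensor systems to the evolution of the curvature operator under Ricci flow. First, standard short-time existence gives a smooth Ricci flow $g(t)$ on some maximal interval $[0,T)$, and Shi's derivative estimates together with the initial bound $|\mathrm{Rm}(g_{0})| \leq C$ allow us to choose $\epsilon_{0} < T$ on which the full curvature operator stays uniformly bounded.

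Next, recall that the curvature operator evolves under Ricci flow by
\begin{equation*}
\frac{\partial}{\partial t}\mathrm{Rm} = \Delta \mathrm{Rm} + Q(\mathrm{Rm}), \qquad Q(R) = R^{2} + R^{\#},
\end{equation*}
where $R^{\#}$ is Hamilton's Lie algebra square. Hamilton's convex-set maximum principle reduces the preservation of any closed, fiberwise convex, $O(n)$-invariant subset of the bundle of algebraic curvature operators to checking that the subset is preserved by the pointwise ODE $\dot R = Q(R)$. The problem therefore becomes a purely algebraic one.

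The core of the argument, following B\"ohm--Wilking, is to exhibit a one-parameter family of closed convex $O(n)$-invariant cones $\mathcal{C}(s)$, $s \in [0,s_{0}]$, with the following properties: (i) $\mathcal{C}(0)$ contains the cone $\{R : \mathrm{sec}(R) \geq 0\}$ of nonnegative sectional curvature operators; (ii) $\mathcal{C}(s) \subset \{R : \mathrm{Ric}(R) \geq 0\}$ for every $s$; and (iii) the family is invariant under $\dot R = Q(R)$ in the sense that $Q(R)$ points into $\mathcal{C}(s)$ along $\partial \mathcal{C}(s)$, modulo the prescribed shift in $s$. Such a family is produced by applying carefully chosen linear automorphisms $\ell_{a,b}$ of the space of algebraic curvature operators, depending linearly on the scalar and trace-free Ricci parts of $R$, to a base cone.

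The main obstacle is the algebraic identity controlling the defect $\ell_{a,b}^{-1} Q(\ell_{a,b} R) - Q(R)$: one must show that, once $a(s)$ and $b(s)$ are tuned appropriately, this defect points inward along $\partial \mathcal{C}(s)$. This step is delicate because $R^{\#}$ mixes the irreducible pieces of $R$ under the $O(n)$-decomposition, and the cross terms must be dominated by the diagonal ones using the hypothesis $\mathrm{sec}(R) \geq 0$. Once this inequality is established, Hamilton's maximum principle yields $\mathrm{Rm}(g(t)) \in \mathcal{C}(t)$ on $[0,\epsilon]$ for some $\epsilon = \epsilon(C,n) > 0$, and in particular $\mathrm{Ric}(g(t)) \geq 0$ throughout that interval.
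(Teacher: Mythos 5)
The paper does not actually prove this lemma: it is imported verbatim from B\"ohm--Wilking (\cite{BW07}, Proposition 2.1), and the only indication of method in the text is the remark that they ``constructed an invariant subset'' and applied Hamilton's maximum principle. Your outline is consistent with that strategy, and the ingredients you name are the right ones: short-time existence plus the doubling-time/Shi estimates to get a uniform interval $[0,\epsilon_{0}]$ with $\epsilon_{0}=\epsilon_{0}(n,C)$ on which $\lvert\mathrm{Rm}\rvert$ stays bounded; the evolution equation $\partial_{t}\mathrm{Rm}=\Delta\mathrm{Rm}+R^{2}+R^{\#}$; and Hamilton's reduction of the preservation of a closed, convex, $O(n)$-invariant (time-dependent) family of sets to an ODE-invariance condition on the fibers.

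The gap is that the entire mathematical content of the lemma lives in the step you defer. You posit a family of cones $\mathcal{C}(s)$ squeezed between the nonnegative-sectional-curvature cone and the nonnegative-Ricci cone and then say ``one must show'' that $Q$ points inward along $\partial\mathcal{C}(s)$ after tuning $a(s),b(s)$ --- but that inequality is the theorem; without it nothing has been proved. Note also why something nontrivial is unavoidable: nonnegative sectional curvature is itself not preserved by Ricci flow, and at a boundary point where a Ricci eigenvalue vanishes the ODE gives $\frac{d}{dt}R_{11}=2\sum_{b}R_{1b1b}R_{bb}$, which one cannot sign without quantitative control on how negative the sectional curvatures become --- this is exactly what the invariant family must encode, and it is where the bound $C$ on the initial curvature enters the size of $\epsilon$. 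A smaller caveat: the linear maps $\ell_{a,b}$ acting on the scalar and trace-free Ricci parts are the machinery of B\"ohm--Wilking's curvature-operator pinching arguments; for this particular short-time statement the original argument is a more direct barrier construction, and in either case the verification of the boundary inequality is what would need to be written out for your proposal to constitute a proof.
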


Now we use the Uhlenbeck's trick(cf.\cite{CLN06}). Let $(M,g(t))$ be a solution to the Ricci flow.
Suppose $\iota_{0}:E\rightarrow TM$ is an isomorphism from the vector bundle $E$ to the tangent
bundle $TM$. Define a 1-parameter family of bundle isomorphisms
$\iota (t):E\rightarrow TM$ by
\begin{equation}\label{eq:4.1}
\left\{\begin{array}{c}
\frac{\mathrm{d}}{\mathrm{d}t}\iota(t)=\mathrm{Ric}(t)\circ\iota(t)\\
\iota(0)=\iota_{0}.
\end{array}
\right.
\end{equation}
Let $\{e_{a}$ be a frame of $E$ and $h=\iota(t)^{*}g(t)$. It is easy to compute that $h$ is independent of $t$.
Now we state our splitting theorem as following.
\begin{theorem}\label{thm:4.1}
Let $(M,g)$ be a compact nonnegatively curved $(2n+1)$-manifold. Assume $F$ is a $2$-form on $M$ so that
$$\mathrm{Ric}+F\circ F=0.$$
Then the universal covering $\tilde{M}$ splits off a line, i.e. $\tilde{M}=N\times\mathbb{R}$
with product metric and $N$ is an $2n$-dimensional manifold with a metric of nonnegative sectional curvature.
\end{theorem}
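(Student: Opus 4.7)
My plan is to produce a non-trivial parallel distribution on $(M,g)$ by running the Ricci flow and invoking Hamilton's strong maximum principle, then to split the universal cover using de Rham's decomposition theorem.

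The starting point is an algebraic observation: at each $x\in M$ the 2-form $F_x$ is a real skew-symmetric endomorphism of the $(2n+1)$-dimensional tangent space $T_xM$ and therefore has even rank bounded by $2n$; thus $\ker F_x$ has dimension at least $1$. Since $\eta=-F\circ F$ has the same kernel as $F$ pointwise, the Einstein-Maxwell hypothesis $\mathrm{Ric}=\eta$ produces a non-trivial null direction of $\mathrm{Ric}$ at every point of $M$.

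Next I evolve the metric by the Ricci flow $g(t)$ with $g(0)=g$. By Lemma \ref{lem:4.1} the flow exists on some $[0,\varepsilon]$ and preserves the condition $\mathrm{Ric}(g(t))\geq 0$. Using the Uhlenbeck trick (\ref{eq:4.1}), I lift the Ricci tensor to a time-dependent symmetric form $\tilde R(t)$ on the fixed inner-product bundle $(E,h)$ satisfying the reaction-diffusion equation
\[
\frac{\partial}{\partial t}\tilde R_{ab}=\Delta\tilde R_{ab}+2\tilde R_{acbd}\tilde R^{cd}.
\]
Hamilton's strong maximum principle for this parabolic system then asserts that, provided the initial pointwise degeneracy persists for small $t>0$, the null space $K(t)\subset TM$ of $\mathrm{Ric}(g(t))$ (pulled back via $\iota(t)$) is a smooth parallel distribution of constant positive rank on all of $M$ for each $t\in(0,\varepsilon)$.

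Finally, a parallel distribution $K(t)$ of rank $k\geq 1$ splits the universal cover of $(M,g(t))$ isometrically as $\tilde M=N(t)\times\mathbb{R}^k$ via de Rham's theorem, and Kotschwar's backward uniqueness of the Ricci flow propagates this product structure back to $t=0$, giving $\tilde M=N\times\mathbb{R}^k$ with $k\geq 1$ for the initial metric, as asserted. The main technical obstacle is precisely the strong maximum principle step at the initial time: a priori the Ricci flow could push the smallest eigenvalue of $\mathrm{Ric}$ to a strictly positive value the instant $t$ becomes positive, in which case the strong maximum principle would be vacuous. Handling this requires a careful analysis that exploits the fact that the initial kernel of $\mathrm{Ric}$ is non-trivial \emph{at every point} of $M$ (not merely somewhere); the Uhlenbeck trick is essential here since it converts the Ricci evolution into a clean parabolic system on a fixed bundle to which Hamilton's strong maximum-principle machinery applies directly.
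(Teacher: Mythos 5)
Your proposal follows the same skeleton as the paper's proof: the algebraic observation that a skew form on an odd-dimensional space has nontrivial kernel, hence $\mathrm{Ric}=-F\circ F$ is degenerate at every point (this is exactly the paper's Lemma \ref{lem:4.2}); short-time preservation of $\mathrm{Ric}\geq 0$ via B\"ohm--Wilking (Lemma \ref{lem:4.1}); the Uhlenbeck trick; a maximum-principle argument to make the null space of $\mathrm{Ric}$ parallel; and a de Rham splitting. However, there is a genuine gap, and it sits precisely at the step you yourself flag and then decline to carry out. Hamilton's strong maximum principle for the system $\partial_t\tilde R_{ab}=\Delta\tilde R_{ab}+2\tilde R_{acbd}\tilde R^{cd}$ says that for $t>0$ the rank of $\tilde R$ is constant and its null space is parallel \emph{provided that null space is nonempty}; the rank is allowed to jump up instantly at $t=0^+$, and nothing in your argument excludes $\mathrm{Ric}(g(t))>0$ for all $t>0$. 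In that scenario $K(t)$ is trivial, the de Rham step produces no splitting, and the Kotschwar backward-uniqueness step has nothing to propagate. The fact that the initial kernel is nontrivial \emph{at every point} does not by itself prevent this (for the analogous curvature-operator system, everywhere-degenerate nonnegative initial data routinely become strictly positive instantly), so the "careful analysis" you defer to is not a technicality but the actual content of the theorem; as written, the proposal reduces the statement to an unproved claim.

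For comparison, the paper tries to sidestep exactly this issue by working at the initial time rather than at $t>0$: it introduces the rescaled tensor $\widetilde{\mathrm{Ric}}=e^{tH}\mathrm{Ric}(g(t))$, uses nonnegative sectional curvature to check that each diagonal entry is a supersolution of the heat equation, and then runs the null-eigenvector computation
$0=\bigl(\partial_t\widetilde{\mathrm{Ric}}\bigr)(v,v)\geq 2\sum_a\widetilde{\mathrm{Ric}}(\nabla_a v,\nabla_a v)\geq 0$
along the vector field $v$ supplied by Lemma \ref{lem:4.2} at $t=0$, concluding that the kernel is parallel for the \emph{initial} metric and splitting directly, with no need for backward uniqueness. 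If you want to salvage your version, you must either justify the persistence of the degeneracy for $t>0$ (using more than $\mathrm{Ric}+F\circ F=0$ at $t=0$, since that identity is not known to propagate under the Ricci flow) or reorganize the argument, as the paper does, so that the parallelism of $\ker\mathrm{Ric}$ is extracted at $t=0$ where Lemma \ref{lem:4.2} applies.
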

Before we prove this theorem, we need a lemma first.
\begin{lemma}\label{lem:4.2}
Let $(M,g)$ be a $(2n+1)$-manifold and $F$ be a $2$-form on $M$. $\eta=-F\circ F$ is nonnegative
definite and there is a smooth vector field $v$ so that $\eta(v,v)=0$.
\end{lemma}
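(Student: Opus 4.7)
My plan is to prove the two assertions in turn. For the nonnegativity of $\eta$, I would fix a point $x \in M$ and put $F_x$ into skew-normal form: since $F_x$ corresponds to a skew-symmetric operator on the $(2n+1)$-dimensional inner product space $T_xM$, there is an orthonormal coframe $\{e^1,\dots,e^{2n+1}\}$ at $x$ with
\begin{equation*}
F_x \;=\; \sum_{k=1}^{r} \lambda_k\, e^{2k-1}\wedge e^{2k}, \qquad r \le n,\ \lambda_k \in \mathbb{R}.
\end{equation*}
Repeating the direct calculation of Lemma~\ref{lem:2.2} in this basis yields
\begin{equation*}
\eta_x \;=\; \sum_{k=1}^{r} \lambda_k^2 \bigl(e^{2k-1}\otimes e^{2k-1} + e^{2k}\otimes e^{2k}\bigr),
\end{equation*}
which is visibly positive semidefinite; equivalently, $\eta(w,w) = |\iota_w F|_g^2 \ge 0$ for every tangent vector $w$.

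For the existence of $v$, the guiding observation is that $F$ is a $2$-form on an odd-dimensional manifold, so the skew-symmetric endomorphism $w \mapsto \iota_w F_x$ of $T_xM$ has a kernel of odd dimension $2n+1-2r \ge 1$. To globalize this pointwise nonvanishing of $\ker F$ to a smooth section, I would use a Pfaffian-type construction and set
\begin{equation*}
v \;:=\; \bigl(\star F^{\wedge n}\bigr)^{\sharp},
\end{equation*}
where $F^{\wedge n} = F \wedge \cdots \wedge F$ ($n$ factors) is a smooth $2n$-form, $\star$ is the Hodge star, and $\sharp$ is the musical isomorphism induced by $g$. This $v$ is manifestly smooth. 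In the basis above, at a point where $r = n$ one has $F^{\wedge n} = n!\,\lambda_1\cdots\lambda_n\,e^1 \wedge \cdots \wedge e^{2n}$, and hence $\star F^{\wedge n} = \pm\, n!\,\lambda_1\cdots\lambda_n\, e^{2n+1}$; thus $v$ is proportional to $e_{2n+1} \in \ker F_x$ and $\eta(v,v) = |\iota_v F|^2 = 0$. Where some $\lambda_k$ vanishes (i.e., $r<n$) one has $F^{\wedge n} = 0$ and $v = 0$, so $\eta(v,v)=0$ holds trivially.

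The main obstacle is passing from the pointwise existence of a kernel vector to a \emph{smooth} global one: the kernel distribution itself can fail to be smooth where the rank of $F$ jumps. The Pfaffian-type expression $\star F^{\wedge n}$ sidesteps this because it is polynomial in $F$ (hence automatically smooth) and is forced by algebra to land in $\ker F$, vanishing exactly where $F$ drops rank---which is permitted by the statement of the lemma. All other steps amount to routine bookkeeping with the skew-normal form.
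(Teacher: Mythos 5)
Your proof of nonnegativity is the same as the paper's: skew-diagonalize $F$ at a point and read off that $\eta$ is diagonal with entries $\mu_k^2$ (the paper's displayed matrix has a stray minus sign on $-\mu_n^2$, evidently a typo); your identity $\eta(w,w)=|\iota_w F|^2$ is a clean restatement. For the second assertion, however, you take a genuinely different and more careful route. The paper simply asserts ``since $\eta$ is smooth, the eigenvector field with respect to zero is smooth,'' which is not a valid argument: where some $\mu_k$ vanishes the zero eigenspace of $\eta$ jumps in dimension, and eigenvectors of a smoothly varying symmetric tensor need not vary smoothly through such degeneracies (nor need a kernel line field admit a global nonvanishing section even when the rank is constant). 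Your Pfaffian-type section $v=(\star F^{\wedge n})^{\sharp}$ is polynomial in $F$, hence automatically smooth, and the normal-form computation shows it lies in $\ker F$, so $\eta(v,v)=|\iota_v F|^2=0$ everywhere. The price is that your $v$ vanishes wherever $\mathrm{rank}\,F<2n$; this satisfies the lemma as literally stated (which is, strictly speaking, already satisfied by $v\equiv 0$), but it is worth flagging that the intended content --- and what the proof of Theorem \ref{thm:4.1} actually uses to split off a line --- is a \emph{nowhere-vanishing} null direction for $\eta$. Neither your construction nor the paper's argument delivers that without an additional hypothesis (e.g.\ that $F$ has constant rank $2n$, in which case your $v$ is nonvanishing and canonical); on this point your write-up is the more honest of the two, since you identify exactly where the smooth selection can degenerate.
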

\begin{proof}
At any point $p\in M$, we can choose local coordinates such that the matrix of $F$ is skew-diagonalized at $p$ as follows,
\begin{equation*}
\left(
\begin{array}{cccccc}
& \mu _{1} &  &  &  &  \\
-\mu _{1} &  &  &  &  &  \\
&  & \ddots  &  &  &  \\
&  &  &  & \mu _{n} &  \\
&  &  & -\mu _{n} &  &  \\
&  &  &  &  & 0%
\end{array}%
\right)
\end{equation*}
Then
\begin{equation*}
\eta =\left(
\begin{array}{cccccc}
\mu _{1}^{2} &  &  &  &  &  \\
& \mu _{1}^{2} &  &  &  &  \\
&  & \ddots  &  &  &  \\
&  &  & \mu _{n}^{2} &  &  \\
&  &  &  & -\mu _{n}^{2} &  \\
&  &  &  &  & 0%
\end{array}%
\right)
\end{equation*}
It is easy to see $\eta$ is nonnegatively definite and has a zero eigenvalue.
Since $\eta$ is smooth, the eigenvector field with respect to zero is smooth.
\end{proof}

Now we are ready to prove the splitting theorem.

\begin{proof}
[Proof of Theorem \ref{thm:4.1}]
Using Uhlenbeck's trick, we consider the Ricci flow start with $(M,g)$. Choose
orthonormal frame $\{e_{a}\}$ for $E$ such that $\iota^{*}(t)\mathrm{Ric}(t)$
are diagonalized. Under (\ref{eq:4.1}),
$$\frac{\partial}{\partial t}R_{aa}=\Delta R_{aa}+2R_{abad}R_{bd}.$$
Choose $H>0$, and consider the modified Ricci tensor
$$\tilde{\mathrm{Ric}}(t)\doteq e^{tH}\mathrm{Ric}(g(t)).$$
\begin{eqnarray*}
\frac{\partial}{\partial t}\tilde{R}_{aa}&=&e^{tH}(HR_{aa}+\Delta R_{aa}+2R_{abab}R_{bb})\\
&\geq&e^{tH}\Delta R_{aa}=\Delta\tilde{R}_{aa},
\end{eqnarray*}
for $t\in [ 0,\delta ]$. Here $\delta$ is a small constant. Denote
$\epsilon_{0}=\mathrm{min}\{\epsilon,\delta\}$, where $\epsilon$ is obtained in Lemma \ref{lem:4.1}.
Let $v$ denote a smooth vector filed on $M$ depending smoothly on $t\in[0,\epsilon_{0}]$ with
$\widetilde{\mathrm{Ric}}(v,v)=0$. Then
\begin{eqnarray*}
0 &=&\left( \frac{\partial }{\partial t}\widetilde{\mathrm{Ric}}\right) (v,v) \\
&\geq &2\sum_{a=1}^{n}\widetilde{\mathrm{Ric}}(\nabla _{a}v,\nabla _{a}v) \\
&\geq &0.
\end{eqnarray*}
This means the kernel of the Ricci curvature is invariant under paralle translation.
By Lemma \ref{lem:4.2} there is a vector field such that $\mathrm{Ric}(v,v)=\eta(v,v)=0$.
The universal covering $\tilde{M}$ splits off a line.
\end{proof}
\section{Rigidity theorems for Einstein-Maxwell systems}
In this section, we shall use the Bochner formula to obtain
some rigidity theorems for Einstein-Maxwell systems.

\begin{lemma}(cf.\cite{LW} chapter 8)\label{lem:5.1}
Assume $(M,g)$ is a $n$-dimensional compact Riemannian manifold.
Let $T=\mathrm{Rm}-\lambda \mathrm{I}\in \Gamma(\bigwedge^{2}T^{*}M\bigotimes E),$
where $\lambda \in \mathbb{R}$ is a constant. If $\mathrm{d}^{*}\mathrm{Rm}=0,$
then for any $q>n/2,$ there exist a constant $C(n,q,\lambda, C_{S})>0$,
so that
$$|T|\leq C(n,q,\lambda, C_{S})\|T\|_{q}$$
And there is $0<\epsilon(n,\lambda, C_{S})<1$,
such that if $\|T\|_{n/2}\leq \epsilon,$ then
$$|T|\leq C(n,\lambda, C_{S})\|T\|_{n/2}.$$
Where $C_{S}$ is the Sobolev constant.
\end{lemma}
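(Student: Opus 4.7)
The plan is to prove this via a standard Bochner/Moser-iteration argument. First I would exploit the harmonicity of $T$ as a bundle-valued two-form. The second Bianchi identity gives $\mathrm{d}\,\mathrm{Rm}=0$, and by hypothesis $\mathrm{d}^{*}\mathrm{Rm}=0$; since the identity section $\mathrm{I}$ is parallel, $T$ satisfies $\mathrm{d}T=\mathrm{d}^{*}T=0$, i.e.\ $\Delta_{\mathrm{d}}T=0$. The Weitzenb\"ock formula then gives
\begin{equation*}
\tfrac{1}{2}\Delta|T|^{2}=|\nabla T|^{2}+\langle \mathcal{R}(T),T\rangle,
\end{equation*}
where $\mathcal{R}$ is a curvature endomorphism built algebraically from $\mathrm{Rm}$. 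Since $\mathrm{Rm}=T+\lambda\mathrm{I}$, we have $|\mathrm{Rm}|\leq |T|+c(n)|\lambda|$, hence $|\langle\mathcal{R}(T),T\rangle|\leq C_{0}(|T|+|\lambda|)|T|^{2}$. Combined with Kato's inequality $|\nabla T|\geq |\nabla |T||$, this yields, setting $u=|T|$, the weak differential inequality
\begin{equation*}
\Delta u \geq -A_{1}u^{2}-A_{2}u,
\end{equation*}
where $A_{1},A_{2}$ depend only on $n$ and $\lambda$.

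Next I would run Moser iteration. Multiplying by $u^{2p-1}$ and integrating, then applying the Sobolev inequality $\|v\|_{2n/(n-2)}^{2}\leq C_{S}(\|\nabla v\|_{2}^{2}+\|v\|_{2}^{2})$ with $v=u^{p}$, produces
\begin{equation*}
\|u^{p}\|_{2n/(n-2)}^{2}\leq C(n,\lambda,C_{S})\,p\!\int_{M}\!\bigl(u^{2p+1}+u^{2p}\bigr)\,\mathrm{d}\mu.
\end{equation*}
The critical term is $\int u^{2p+1}$. For the second part of the lemma, use H\"older's inequality in the sharp form
\begin{equation*}
\int u^{2p+1}=\int u\cdot u^{2p}\leq \|u\|_{n/2}\,\|u^{p}\|_{2n/(n-2)}^{2}.
\end{equation*}
Choosing $\epsilon(n,\lambda,C_{S})$ so that $C(n,\lambda,C_{S})p_{0}\epsilon \leq \tfrac{1}{2}$ (for a starting exponent $p_{0}$ slightly above $1$), the critical term is absorbed into the left-hand side, leaving
\begin{equation*}
\|u^{p}\|_{2n/(n-2)}^{2}\leq C'\,p\,\|u\|_{2p}^{2p}.
\end{equation*}
Iterating along $p_{k}=(n/(n-2))^{k}p_{0}$ and verifying that the resulting product and the series $\sum k/p_{k}$ converge produces $\|u\|_{\infty}\leq C(n,\lambda,C_{S})\|u\|_{n/2}$.

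For the first part ($q>n/2$, no smallness assumption), apply H\"older instead as
\begin{equation*}
\int u^{2p+1}\leq \|u\|_{q}\,\|u^{p}\|_{2q/(q-1)}^{2},
\end{equation*}
and since $q>n/2$ we have $2q/(q-1)<2n/(n-2)$, so interpolating $\|u^{p}\|_{2q/(q-1)}$ between $\|u^{p}\|_{2n/(n-2)}$ and $\|u^{p}\|_{2}$ (with Young's inequality) absorbs a small fraction of $\|u^{p}\|_{2n/(n-2)}^{2}$ into the left-hand side at each step. The resulting estimate again iterates to an $L^{\infty}$ bound, now controlled by $\|T\|_{q}$ with a constant depending on $n,q,\lambda,C_{S}$.

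The main obstacle is the critical scaling of the term $\int u^{2p+1}$, which arises from the quadratic dependence $|\mathcal{R}(T)|\lesssim |\mathrm{Rm}||T|\lesssim |T|^{2}$. This term has exactly the scaling that puts $L^{n/2}$ at the borderline for Moser iteration, which is the structural reason one must either assume $q>n/2$ (to gain a small integrability margin that can be interpolated away) or impose an $\epsilon$-smallness of $\|T\|_{n/2}$ (to directly absorb the critical term). Keeping careful track of the dependence of constants on $p$ so that the Moser scheme's series converges is the routine but somewhat delicate bookkeeping part.
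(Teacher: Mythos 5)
The paper does not actually prove this lemma; it is quoted from \cite{LW}, Chapter 8, so there is no internal argument to compare against. Your strategy --- harmonicity of $\mathrm{Rm}$ as a bundle-valued $2$-form (second Bianchi identity plus $\mathrm{d}^{*}\mathrm{Rm}=0$, with $\mathrm{I}$ parallel so that $T$ is harmonic too), the Weitzenb\"ock formula, Kato's inequality, and Moser iteration on $\Delta u\geq -A_{1}u^{2}-A_{2}u$ --- is exactly the standard proof of this $\epsilon$-regularity statement and is the right approach.

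There is, however, one step that fails as literally written. In the second part you absorb the critical term via $\int u^{2p+1}\leq\|u\|_{n/2}\,\|u^{p}\|_{2n/(n-2)}^{2}$ and require $C(n,\lambda,C_{S})\,p\,\epsilon\leq\tfrac{1}{2}$; but this absorption must be performed at every stage of the iteration, and $p_{k}=(n/(n-2))^{k}p_{0}\to\infty$, so no fixed $\epsilon$ can satisfy $C\,p_{k}\,\epsilon\leq\tfrac{1}{2}$ for all $k$. The standard repair is a two-step structure: use the smallness of $\|u\|_{n/2}$ only once, at a single fixed exponent, to conclude $\|u\|_{q_{0}}\leq C\|u\|_{n/2}$ for some fixed $q_{0}>n/2$; then invoke the first (subcritical) part of the lemma with $q=q_{0}$. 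In that subcritical regime the H\"older--interpolation--Young scheme you describe does iterate, because the fraction of $\|u^{p}\|_{2n/(n-2)}^{2}$ absorbed at each step can be made as small as needed at the price of a $p$-dependent constant whose infinite product still converges. Relatedly, in the first part the Young-inequality step leaves a factor $\|u\|_{q}^{1/(1-\theta)}$ in each iteration constant, so the bound you actually obtain has the form $|T|\leq C(n,q,\lambda,C_{S})\left(1+\|T\|_{q}\right)^{\alpha}\|T\|_{q}$ for some $\alpha=\alpha(n,q)$; this is harmless in the $\epsilon$-regularity application, where $\|T\|_{q_{0}}$ has already been bounded by $C\epsilon$, but you should either record that dependence explicitly or observe that the lemma as stated suppresses it.
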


The Bochner technique implies that an Einstein-Maxwell system with positive curvature
operator must be an Einstein manifold with $F=0$ since $F$ is a harmonic
form (cf.\cite{Li09}).

From the observation above and lemma \ref{lem:5.1}, we have the following
rigidity theorem which is a generalization of Einstein manifolds.
\begin{theorem}\label{thm:5.1}
Given $\lambda>0, \delta>0$, there is a constant $\epsilon(n,\lambda,\delta)>0$ such that if $(M,g,F)$ is an Einstein-Maxwell system with
\begin{itemize}
\item[(\romannumeral1)]$R-|F|^{2}\geq\delta$ and $\nabla F=0$,
\item[(\romannumeral2)]$\|\mathrm{Rm}-\lambda \mathrm{I}\|_{\frac{n}{2}}\leq\epsilon,$
\end{itemize}
then $g$ has constant sectional curvature and $F=0$.
\end{theorem}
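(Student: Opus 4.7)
The plan is: (1) derive $\mathrm{d}^{*}\mathrm{Rm}=0$ from the hypotheses, (2) apply Lemma \ref{lem:5.1} to obtain a pointwise $C^{0}$-bound on $T=\mathrm{Rm}-\lambda\mathrm{I}$, and (3) close the argument with two successive Bochner arguments, one to force $F\equiv 0$ and one to upgrade the $C^{0}$-smallness to $T\equiv 0$.

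In step (1), $\nabla F=0$ makes $|F|^{2}$ constant, and Lemma \ref{lem:2.1} then forces $R$ to be constant (automatically when $n=4$, and as a consequence of the constancy of $|F|$ when $n\neq 2,4$). Hence $f$ is constant and $\nabla\eta=-\nabla(F\circ F)=0$, so $\nabla\mathrm{Ric}=\nabla(\eta+fg)=0$. The twice-contracted second Bianchi identity $\nabla^{i}R_{ijkl}=\nabla_{k}R_{jl}-\nabla_{l}R_{jk}$ now yields $\mathrm{d}^{*}\mathrm{Rm}=0$. In step (2), Lemma \ref{lem:5.1} applies to $T$, and choosing $\epsilon$ below the threshold of that lemma yields the pointwise bound $|T|\leq C(n,\lambda,C_{S})\,\epsilon$ throughout $M$.

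For the first half of step (3), since $F$ is parallel (hence in particular harmonic), the Weitzenbock formula $\Delta_{d}F=\nabla^{*}\nabla F+\mathcal{W}_{2}(F)$ collapses to $\mathcal{W}_{2}(F)=0$. A direct computation on a space form of sectional curvature $\lambda$ gives $\mathcal{W}_{2}=2(n-2)\lambda\,\mathrm{id}_{\Lambda^{2}}$, so the $C^{0}$-bound on $T$ produces
$$0=\langle\mathcal{W}_{2}F,F\rangle\geq \bigl(2(n-2)\lambda-C\epsilon\bigr)|F|^{2},$$
which for $n\geq 3$ and $\epsilon$ small enough forces $F\equiv 0$. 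For the second half, with $F=0$ the metric is Einstein with $\mathrm{Ric}=fg$, $f\geq \delta/n>0$, and the algebraic decomposition $\mathrm{Rm}=\frac{R}{n(n-1)}\mathrm{I}+W$ gives $|W|\leq C\epsilon$ pointwise. A Weitzenbock identity for $W$ on a positive-scalar Einstein manifold, combined with a Moser-iteration argument fuelled by the Sobolev inequality and the smallness of $\|W\|_{n/2}$, then forces $W\equiv 0$, so $g$ has constant sectional curvature $R/(n(n-1))$.

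The main obstacle is this final step: upgrading the $C^{0}$-smallness $|T|\leq C\epsilon$ to the exact equality $T=0$. This is essentially a gap theorem for the Weyl tensor on positive-scalar Einstein manifolds; although the Bochner-Sobolev iteration is standard in spirit, extracting a threshold $\epsilon$ depending only on $n,\lambda,\delta$ (and the Sobolev constant implicit in Lemma \ref{lem:5.1}) requires careful tracking of constants.
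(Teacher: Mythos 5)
Your steps (1) and (2) and the first half of step (3) track the paper's argument closely: the paper likewise derives $\mathrm{d}^{*}\mathrm{Rm}=0$ from $\nabla F=0$, invokes Lemma \ref{lem:5.1} to make the curvature operator pointwise close to $\lambda\,\mathrm{I}$ (hence positive), and kills $F$ by the Bochner technique applied to the harmonic form $F$. One point you leave dangling there, which the paper settles explicitly, is why the resulting threshold $\epsilon$ depends only on $n,\lambda,\delta$: condition (\romannumeral1) gives $\mathrm{Ric}=\eta+fg\geq (\delta/n)g>0$, so Myers' theorem bounds the diameter and Gromov--Gallot then bounds the Sobolev constant $C_{S}$ in terms of $n$ and $\delta$; without this, the constant in Lemma \ref{lem:5.1} is not under control.

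The genuine gap is your final step, and it is exactly the one you flag as ``the main obstacle.'' You propose a quantitative Weyl-tensor gap theorem (Weitzenb\"ock identity for $W$ plus Moser iteration) to upgrade $|T|\leq C\epsilon$ to $T\equiv 0$, but you do not carry it out, and it is unnecessary: once you know $\mathrm{d}^{*}\mathrm{Rm}=0$ (your step (1)) and $\mathrm{Rm}>0$ pointwise (your step (2)), Tachibana's theorem \cite{Ta74} already gives constant sectional curvature with no further smallness or iteration argument. This is precisely how the paper concludes. So the missing ingredient is not an estimate but a citation; as written, your proof is incomplete at the one place where the paper's is a single sentence. (Your proposed alternative is plausible --- gap theorems of this type for Einstein metrics with small $\|W\|_{n/2}$ do exist --- but it would require the correct Weitzenb\"ock formula for $W$ on an Einstein manifold together with the cubic-term estimate, none of which you supply.)
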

\begin{proof}
The positive lower bound for the Ricci tensor gives an upper bound
for the diameter by virtue of Myers' theorem.
Thanks to Gromov and Gallot \cite{Ga88} we know that upper diameter bounds and
lower Ricci curvature bounds give bounds for $C_{s}$.
Then by lemma \ref{lem:5.1}, the curvature operator has eigenvalues close
to $\lambda>0$ and hence are all positive for small $\epsilon$.

Thus by the observation above and Tachibana's theorem (\cite{Ta74}), which states that a compact
oriented Riemannian manifold with $\mathrm{d}^{*}\mathrm{Rm}=0$
and $\mathrm{Rm}>0$ must have constant sectional curvature. $\nabla F=0$ implies $\mathrm{d}^{*}\mathrm{Rm}=0$.
Then the theorem follows easily.
\end{proof}

S.Goldberg and S.Kobayashi in \cite{GK67} proved that a compact K\"{a}hler-Einstein
manifold with positive orthogonal bisectional curvature has constant
holomorphic sectional curvature.
We can extend this theorem to K\"{a}hler Einstein-Maxwell systems with positive quadratic orthogonal
bisectional curvature.
A K\"{a}hler Einstein-Maxwell system is an Einstein-Maxwell system $(M,g,F)$ with
the metric $g$ a K\"{a}hler metric and $F$ a harmonic $(1,1)$-form.

In \cite{GZ10}, Gu and Zhang proved that if a K\"{a}hler manifold has nonnegative
orthogonal bisectional curvature, then all harmonic $(1,1)$-forms are parallel.
Using the standard Bochner technique, A.Chau and L.Tam\cite{CT11} generalized this result to K\"{a}hler
manifolds with nonnegative quadratic orthogonal bisectional curvature.
\begin{lemma}[cf.\cite{CT11},\cite{GZ10}]\label{lem:5.2}
If $(M,g)$ has nonnegative quadratic orthogonal holomorphic bisectional curvature,
then all harmonic $(1,1)$-forms are parallel.
\end{lemma}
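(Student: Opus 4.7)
The plan is to apply the Bochner--Weitzenböck technique to a harmonic $(1,1)$-form $\alpha$ on the compact Kähler manifold $(M,g)$, identify the curvature term with a quadratic orthogonal bisectional curvature expression, and conclude parallelism from compactness.

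First, I would recall the Weitzenböck formula for a harmonic $2$-form on a Kähler manifold. For any harmonic $\alpha\in\Omega^{1,1}(M)$ we have
\begin{equation*}
\tfrac{1}{2}\Delta|\alpha|^{2}=|\nabla\alpha|^{2}+\langle \mathfrak{R}(\alpha),\alpha\rangle,
\end{equation*}
where $\mathfrak{R}$ is the Weitzenböck curvature endomorphism acting on $(1,1)$-forms. The core of the argument is to rewrite $\langle \mathfrak{R}(\alpha),\alpha\rangle$ pointwise in a form that is manifestly controlled by the quadratic orthogonal bisectional curvature.

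Next, at an arbitrary point $p\in M$, I would choose a unitary frame $\{e_{1},\ldots,e_{n}\}$ that simultaneously diagonalizes the Hermitian endomorphism associated to $\alpha$; in such a frame $\alpha=\sqrt{-1}\sum_{i}\lambda_{i}\,e^{i}\wedge\bar e^{\,i}$ with real eigenvalues $\lambda_{i}$. A direct computation using the Kähler identity $R_{i\bar j k\bar l}=R_{k\bar j i\bar l}$ and the antisymmetry of $\alpha$ then shows that the curvature term reduces to an expression of the form
\begin{equation*}
\langle \mathfrak{R}(\alpha),\alpha\rangle=\tfrac{1}{2}\sum_{i,j}R_{i\bar{i}j\bar{j}}(\lambda_{i}-\lambda_{j})^{2},
\end{equation*}
which is precisely (up to normalization) the quadratic orthogonal bisectional curvature evaluated on $(\lambda_{1},\ldots,\lambda_{n})$. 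The hypothesis therefore gives $\langle \mathfrak{R}(\alpha),\alpha\rangle\geq 0$ at every point.

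Finally, integrating the Bochner identity over the compact manifold $M$, the Laplacian term vanishes and we obtain
\begin{equation*}
0=\int_{M}|\nabla\alpha|^{2}\,\mathrm{d}\mu+\int_{M}\langle \mathfrak{R}(\alpha),\alpha\rangle\,\mathrm{d}\mu\geq\int_{M}|\nabla\alpha|^{2}\,\mathrm{d}\mu,
\end{equation*}
forcing $\nabla\alpha\equiv 0$. The main obstacle is the algebraic identification of the Weitzenböck term with the quadratic orthogonal bisectional curvature expression; this is where one must carefully exploit both the Kähler symmetry of the curvature tensor and the simultaneous diagonalization of $\alpha$, and it is exactly the step carried out in Chau--Tam \cite{CT11} and Gu--Zhang \cite{GZ10}. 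Once that identity is in hand, the remainder of the proof is a clean application of the maximum principle / integration argument.
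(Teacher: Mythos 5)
Your proposal is correct and follows precisely the standard Bochner--Weitzenb\"ock argument that the paper attributes to \cite{CT11} and \cite{GZ10}; the paper itself states Lemma \ref{lem:5.2} as a quoted result and supplies no proof of its own. The pointwise identification of the Weitzenb\"ock curvature term with $\sum_{i,j}R_{i\bar{i}j\bar{j}}(\lambda_{i}-\lambda_{j})^{2}$ in a unitary frame diagonalizing the harmonic $(1,1)$-form is exactly the computation carried out by Chau--Tam, so your route coincides with the intended one.
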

\begin{lemma}[see \cite{GH76}for example]\label{lem:5.3}
If $\xi$ is a $(p,q)$-form on a closed K\"{a}hler manifold, and $\xi$ is $\mathrm{d}$-,$\partial$- or
$\overline\partial$-exact, then there is a $(p-1,q-1)$-form $\varrho$
$$\xi=\partial\overline\partial\varrho.$$ If $p=q$, and $\xi$ is real, then we may take $\sqrt{-1}\varrho$
is real.
\end{lemma}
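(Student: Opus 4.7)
The statement is the classical $\partial\bar\partial$-lemma for compact K\"ahler manifolds; I would prove it by combining Hodge theory with the K\"ahler identities.

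First I would reduce to the case in which $\xi$ is simultaneously $\partial$-closed and $\bar\partial$-closed. Assuming (as is implicit, since $\partial\bar\partial\varrho$ is automatically $d$-closed) that $\xi$ is $d$-closed, matching bidegrees in $d\xi = \partial\xi + \bar\partial\xi = 0$ with $\xi$ of pure type $(p,q)$ forces $\partial\xi = 0$ and $\bar\partial\xi = 0$. On a compact K\"ahler manifold the three Laplacians satisfy $\Delta_{\partial} = \Delta_{\bar\partial} = \tfrac{1}{2}\Delta_{d}$, so the harmonic projection $\mathcal{H}$ and the Green's operator $G$ are independent of which operator we use; in particular the three notions of exactness coincide on the space of $d$-closed pure-type forms, which handles the $\partial$- and $\bar\partial$-exact cases in parallel with the $d$-exact case.

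Next I would apply the $\partial$-Hodge decomposition. Since $\xi$ is exact we have $\mathcal{H}\xi = 0$, and since $\partial\xi = 0$ the decomposition collapses to
$$\xi = \partial\partial^{*}G\xi.$$
I would then substitute the K\"ahler identity $\partial^{*} = i[\Lambda,\bar\partial] = i\Lambda\bar\partial - i\bar\partial\Lambda$. Using the commutation $[G,\bar\partial] = 0$ (a consequence of the coincidence of Laplacians) together with $\bar\partial\xi = 0$, the term $\partial\Lambda\bar\partial G\xi = \partial\Lambda G\bar\partial\xi$ vanishes, leaving
$$\xi = -i\partial\bar\partial(\Lambda G\xi) = \partial\bar\partial\varrho$$
with $\varrho = -i\Lambda G\xi$, which is of type $(p-1,q-1)$ since $\Lambda$ maps $\Omega^{p,q}$ to $\Omega^{p-1,q-1}$.

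For the reality statement, assume $p = q$ and $\xi$ is real. Conjugation gives $\overline{\partial\bar\partial\varrho} = \bar\partial\partial\bar\varrho = -\partial\bar\partial\bar\varrho$, so reality of $\xi$ yields $\partial\bar\partial(\varrho + \bar\varrho) = 0$. Replacing $\varrho$ by $\tfrac{1}{2}(\varrho - \bar\varrho)$ preserves $\partial\bar\partial\varrho = \xi$ and makes $\varrho$ purely imaginary, so $\sqrt{-1}\varrho$ is real. The only non-bookkeeping step is the interplay between the $\partial$-Hodge decomposition and the K\"ahler identity; both $[G,\bar\partial] = 0$ and $\partial^{*} = i[\Lambda,\bar\partial]$ rely essentially on the K\"ahler condition and fail for general Hermitian metrics.
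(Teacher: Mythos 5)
Your argument is correct, and it is essentially the standard proof of the $\partial\overline{\partial}$-lemma found in the reference the paper cites: the paper itself gives no proof of this lemma, pointing instead to \cite{GH76}, where the argument is exactly the one you give (coincidence of the three Laplacians, the Hodge decomposition for $\partial$, and the K\"ahler identity $\partial^{*}=i[\Lambda,\overline{\partial}]$, yielding $\varrho=-i\Lambda G\xi$). You also correctly flag and repair the one imprecision in the statement, namely that $d$-closedness of $\xi$ must be assumed in addition to exactness, since it is forced by the conclusion $\xi=\partial\overline{\partial}\varrho$ but does not follow from $\partial$- or $\overline{\partial}$-exactness alone.
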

Based on these results, we have the following theorem.
\begin{theorem}\label{thm:5.2}
Assume $(M,g)$ is a closed K\"{a}hler manifold of complex dimension $n\geq 2$, $F$ is
a real $(1,1)$ form on $M$. If $(g,F)$ satisfies the Einstein-Maxwell equations and the metric $g$
has nonnegative quadratic orthogonal holomorphic bisectional curvature.
Then $F$ and the Ricci curvature $\mathrm{Ric}$ are parallel.
Furthermore,if $b_{1,1}(M)= dim H^{1,1}(M; \mathbb{R})=1$, then
$g$ has constant holomorphic sectional curvature.
\end{theorem}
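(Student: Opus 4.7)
The plan is to establish the two conclusions in sequence, both leveraging Lemma \ref{lem:5.2}.

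First I would show $\nabla F = 0$. The Einstein-Maxwell system includes $\mathrm{d}F = 0$ and $\mathrm{d}^{*}F = 0$, so $F$ is a real harmonic $(1,1)$-form. Since $(M,g)$ has nonnegative quadratic orthogonal holomorphic bisectional curvature, Lemma \ref{lem:5.2} gives $\nabla F = 0$ immediately. In particular $|F|^{2}$ is constant, and Lemma \ref{lem:2.1} then forces the scalar curvature $R$ to be constant (automatically in complex dimension $n=2$, and via the identity $(4-4n)R + (2n-4)|F|^{2} = C$ when $n \geq 3$, once $|F|$ is known to be constant). Hence $f = (R - |F|^{2})/2n$ is constant, $\eta = -F \circ F$ is parallel, and the Einstein-Maxwell equation $\mathrm{Ric} = \eta + fg$ gives $\nabla \mathrm{Ric} = 0$. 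This completes the first half of the theorem.

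Next I would pin down $F$ using $b_{1,1}(M) = 1$. Since the K\"ahler form $\omega$ is a nonzero parallel, hence harmonic, real $(1,1)$-form, $[\omega]$ spans the one-dimensional space $H^{1,1}(M;\mathbb{R})$. Thus $[F] = c[\omega]$ for some $c \in \mathbb{R}$, and uniqueness of the harmonic representative in each cohomology class forces $F = c\omega$. A direct pointwise computation in a $J$-adapted unitary frame (Lemma \ref{lem:2.2} specialized so that every skew-eigenvalue of $F$ equals $c$) yields $\eta = -F\circ F = c^{2}g$, so $\mathrm{Ric} = (c^{2} + f)g$ and $g$ is K\"ahler-Einstein.

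The remaining step promotes Einstein to constant holomorphic sectional curvature. The parallelness of $\mathrm{Ric}$ gives a de Rham splitting of the universal cover $\widetilde{M}$ into irreducible K\"ahler-Einstein factors; the constraint $b_{1,1}(M) = 1$ rules out any flat summand of $\widetilde{M}$ (each such summand would produce a complex torus factor of $M$ contributing independent harmonic $(1,1)$-classes) and leaves a single irreducible compact factor. Invoking the classification of compact irreducible K\"ahler manifolds with nonnegative (quadratic orthogonal) holomorphic bisectional curvature \cite{GZ10,CT11} together with Mok's theorem, one identifies $\widetilde{M}$ with $\mathbb{C}P^{n}$ equipped with the Fubini-Study metric, yielding constant holomorphic sectional curvature on $M$. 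The main obstacle is precisely this last classification step: the pair (K\"ahler-Einstein with $b_{1,1} = 1$, nonnegative QOB curvature) is a priori compatible with irreducible Hermitian symmetric spaces of compact type other than $\mathbb{C}P^{n}$ that still satisfy $b_{1,1} = 1$, so a genuine classification-type input is needed rather than a purely elementary argument. The earlier parallelness assertions for $F$ and $\mathrm{Ric}$, by contrast, are essentially mechanical consequences of Lemmas \ref{lem:2.1} and \ref{lem:5.2}.
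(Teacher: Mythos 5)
Your first half coincides with the paper's: harmonicity of $F$ plus Lemma \ref{lem:5.2} gives $\nabla F=0$, then Lemma \ref{lem:2.1} makes $R$, $|F|^{2}$ and $f$ constant, and $\mathrm{Ric}=\eta+fg$ is parallel. Your route from $b_{1,1}(M)=1$ to the K\"ahler--Einstein condition is different from the paper's and is arguably cleaner: you apply the hypothesis to $F$ itself, obtaining $F=c\omega$ by uniqueness of harmonic representatives and hence $\eta=c^{2}g$ pointwise, whereas the paper applies it to the Ricci form, writes $\rho=\kappa\omega+\sqrt{-1}\partial\overline{\partial}h$ via Lemma \ref{lem:5.3}, and kills $h$ with the maximum principle. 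Both arguments are valid; yours avoids the $\partial\overline{\partial}$-lemma entirely.

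The genuine divergence is the last step, and there your proof has a gap that you yourself acknowledge. The paper does not pass to the universal cover or invoke Mok's classification at all: having reached the K\"ahler--Einstein condition it simply quotes the Goldberg--Kobayashi theorem (a compact K\"ahler--Einstein manifold with positive orthogonal bisectional curvature has constant holomorphic sectional curvature) and stops. So the step you are missing, relative to the paper, is just ``apply Goldberg--Kobayashi directly on $M$.'' However, the obstacle you flag is substantive and is not actually resolved by the paper either: Goldberg--Kobayashi requires \emph{positive} orthogonal bisectional curvature, while the hypothesis of the theorem supplies only \emph{nonnegative} quadratic orthogonal bisectional curvature. Exactly as you suspect, an irreducible compact Hermitian symmetric space of rank at least $2$ with $b_{2}=1$ (for instance the complex quadric $Q^{n}$, $n\geq3$, with $F=c\omega$, $c\neq 0$) satisfies every hypothesis of the theorem --- it is K\"ahler--Einstein, has nonnegative bisectional and hence nonnegative quadratic orthogonal bisectional curvature, has $b_{1,1}=1$, and $(g,c\omega)$ solves the Einstein--Maxwell equations --- yet it does not have constant holomorphic sectional curvature. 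So your argument is incomplete precisely where you say it is, but the paper's own proof breaks down at the same point; a strict positivity assumption is needed somewhere to close the final step.
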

\begin{proof}
By the theorem of A.Chau and L.Tam, it is easy to know that $F$ is parallel since $F$
is harmonic. Thus we have that $|F|$ is constant and then the scalar curvature $R$ is a positive constant
thanks to lemma \ref{lem:2.1}.
Since the nonnegativity of quadratic orthogonal holomorphic bisectional curvature implies $R\geq0$.
We also know that $\eta$ is parallel, so do $\overset{\circ}{\eta}$ and $\overset{\circ}{\mathrm{Ric}}$.
Thus the Ricci curvature is parallel consequently.
If further $b_{1,1}(M)= 1$, then there is a constant $\kappa$ so that the Ricci form
$$[\rho]=\kappa[\omega].$$
Then lemma \ref{lem:5.3} \ tells us that there exists a real function $h$ on $M$, such that
$$\rho=\kappa\omega+\sqrt{-1}\partial\overline{\partial}h.$$
Take trace of both sides, we get
$$\frac{1}{2}\Delta h=\Delta_{\overline{\partial}}h=R-n\kappa.$$
The maximum principal gives us that $h$ is constant and $g$ is a K\"{a}hler-Einstein metric
$$\mathrm{Ric}=\frac{1}{n}Rg.$$
Thus the solution $(g,F)$ to Einstein Maxwell equations reduce to an Einstein metric and
a harmonic form. By using the theorem of S.Goldberg and S.Kobayashi,
we know that $(M,g)$ has constant holomorphic sectional curvature.
\end{proof}
S.Brendle \cite{Br08} showed that if $(M,g)$ has nonnegative isotropic curvature and $Hol^{0}(M,g)=U(n)$,
then $(M,g)$ has positive orthogonal bisectional curvature.(also see \cite{Se07} .)
By Theorem 2.1 and Corollary 2.2 in \cite{GZ10}, a K\"{a}hler manifold with
positive orthogonal holomorphic bisectional curvature must have
$b_{1,1}(M)=\dim H^{1,1}(M)=1$ and $C_{1}(M)>0$. In particular, all real harmonic $(1,1)$-forms are parallel.
Combining these with theorem 1.9  in \cite{Ch06}, we have the following corollary.
\begin{corollary}
Assume $(M,g)$ is a smooth oriented closed manifold with real dimension $2n\geq 4$, and $F$
is a $2$-form such that $(g,F)$ is a solution to the Einstein-Maxwell equations.
If $(M,g)$ has nonnegative isotropic curvature and $Hol^{0}(M,g)=U(n)$, then
$M$ is biholomorphic to $\mathbb{C}P^{n}$ and $g$ has constant holomorphic sectional curvature.
\end{corollary}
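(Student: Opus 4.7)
The plan is to chain together the cited results in the paragraph preceding the corollary with Theorem \ref{thm:5.2}, after first verifying that the $2$-form $F$ arising from the Einstein-Maxwell equations is a real $(1,1)$-form, which is a standing hypothesis of Theorem \ref{thm:5.2}.

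First I would observe that $Hol^{0}(M,g)=U(n)$ means $g$ is K\"{a}hler of complex dimension $n$. By Brendle's theorem (as cited), nonnegative isotropic curvature together with $Hol^{0}(M,g)=U(n)$ upgrades to positive orthogonal holomorphic bisectional curvature. Then by Theorem 2.1 and Corollary 2.2 of Gu-Zhang (as cited), this in turn yields $b_{1,1}(M)=\dim H^{1,1}(M;\mathbb{R})=1$, $C_{1}(M)>0$, and that all real harmonic $(1,1)$-forms are parallel.

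Next I would identify $F$ as a real $(1,1)$-form. Since $F$ solves the Maxwell equations $\mathrm{d}F=0$ and $\mathrm{d}^{\ast}F=0$, it is a real harmonic $2$-form. The positivity of $C_{1}(M)$ makes $M$ a Fano manifold, so Kodaira vanishing gives $h^{2,0}(M)=h^{0,2}(M)=0$, whence $H^{2}(M;\mathbb{R})=H^{1,1}(M;\mathbb{R})$. Therefore $F$ is automatically a real $(1,1)$-form. Because positive orthogonal holomorphic bisectional curvature implies nonnegative quadratic orthogonal holomorphic bisectional curvature, the hypotheses of Theorem \ref{thm:5.2} are met, together with $b_{1,1}(M)=1$, and we conclude that $g$ has constant holomorphic sectional curvature (necessarily positive, since $C_{1}(M)>0$). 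Finally, Theorem 1.9 of \cite{Ch06}, cited immediately above the corollary, upgrades constant positive holomorphic sectional curvature to the biholomorphism $M\cong\mathbb{C}P^{n}$.

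The main obstacle, although still routine, is the middle step: passing from a mere closed and coclosed real $2$-form to its type decomposition, so that $F$ can be fed into Theorem \ref{thm:5.2}. This requires invoking the Fano property through Kodaira vanishing rather than the Hodge number bound $b_{1,1}=1$ alone, since the latter does not by itself preclude pieces in $H^{2,0}\oplus H^{0,2}$. Once this identification is secured, the remainder is merely a concatenation of the theorems already assembled in the paper.
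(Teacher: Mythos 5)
Your proposal is correct and follows essentially the same chain as the paper's: Brendle's theorem to get positive orthogonal bisectional curvature, Gu--Zhang for $b_{1,1}(M)=1$, $C_{1}(M)>0$ and the parallelism of harmonic $(1,1)$-forms, then Theorem \ref{thm:5.2} together with Chen's Theorem 1.9. Your intermediate step verifying that the harmonic $2$-form $F$ is actually of type $(1,1)$ (via $C_{1}(M)>0$ and Kodaira vanishing, so $h^{2,0}=h^{0,2}=0$) is a detail the paper leaves implicit even though it is needed to invoke Theorem \ref{thm:5.2} legitimately, so it is a welcome tightening rather than a different route.
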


We mention here that the condition $Hol^{0}(M,g)=U(n)$ means $g$ is a generic K\"{a}hler metric.
A rigidity theorem for generic K\"{a}hler surface with constant scalar curvature follows.
\begin{corollary}
Let $(M,g)$ be a smooth oriented compact 4-manifold with constant scalar curvature. If $(M,g)$ has
nonnegative isotropic curvature and $Hol^{0}(M,g)=U(2)$, then $M$ is biholomorphic
to $\mathbb{C}P^{2}$ and $g$ has constant holomorphic sectional curvature.
\end{corollary}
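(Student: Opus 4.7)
The plan is straightforward: reduce this corollary to the previous one by manufacturing, from the constant scalar curvature hypothesis, a $2$-form $F$ that makes $(g,F)$ an Einstein-Maxwell system, after which the preceding corollary applies verbatim with $n=2$.

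First I would extract the K\"{a}hler structure directly from the holonomy assumption. The condition $Hol^{0}(M,g)=U(2)$ is precisely the statement that $(M,g)$ admits a parallel compatible almost complex structure $J$, so $(M,g,J)$ is a K\"{a}hler surface, equipped with the K\"{a}hler form $\omega=g(J\cdot,\cdot)$ and the Ricci form $\rho=\mathrm{Ric}(J\cdot,\cdot)$. Second, since the scalar curvature $R$ is assumed constant, LeBrun's observation (Proposition \ref{prop:1}) applies: for any $a>0$, the pair $(g,F_{a})$ with
\begin{equation*}
F_{a}=a\omega+\frac{1}{2a}\overset{\circ}{\rho}
\end{equation*}
satisfies the Einstein-Maxwell equations. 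In particular, $(M,g,F_{a})$ is an Einstein-Maxwell system of real dimension $4=2\cdot 2$ which still has nonnegative isotropic curvature and still satisfies $Hol^{0}(M,g)=U(2)$. The hypotheses of the preceding corollary are therefore all met with $n=2$, and it delivers at once that $M$ is biholomorphic to $\mathbb{C}P^{2}$ and that $g$ has constant holomorphic sectional curvature.

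There is no substantive obstacle: the corollary is a specialization of the preceding one, obtained by repackaging \emph{constant scalar curvature} as an \emph{Einstein-Maxwell condition} via LeBrun's bridge. The only nontrivial input beyond that bridge is the standard identification of $U(n)$-holonomy with a K\"{a}hler structure, and the fact that the specific $F_{a}$ produced by Proposition \ref{prop:1} neither alters the underlying metric $g$ nor its curvature-theoretic hypotheses (nonnegative isotropic curvature, $U(2)$ holonomy), so the passage from the general corollary to the present one is essentially immediate.
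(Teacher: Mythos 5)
Your proposal is correct and matches the paper's intended argument: the paper states this corollary as an immediate consequence of the preceding one, precisely by using the holonomy condition to identify $g$ as a K\"{a}hler metric and then invoking LeBrun's observation (Proposition \ref{prop:1}) to package the constant scalar curvature condition as an Einstein-Maxwell system. No further comment is needed.
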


\bigskip \textbf{Acknowledgement. }I am grateful to Professor F.Q. Fang and Professor L. Ni for
their brilliant guidance and stimulations. Thanks also go to A. Zhu and K. Wang for a lot of
conversations.\bigskip

\bibliographystyle{amsplain}

\begin{thebibliography}{10}

\bibitem{An89} M.T. Anderson. \emph{Ricci curvature bounds and einstein metrics on compact manifolds.}
J.Am.Math.Soc.2, pages 455每490, 1989.
\bibitem{An90} M.T. Anderson. \emph{Convergence and rigidity of manifolds under ricci curvature bounds.} Inven-
t.math.102, pages 429每445, 1990.
\bibitem{BKN89} S. Bando, A. Kasue, and H. Nakajima.\emph{ On a construction of coordinates at infinity on man-
ifolds with fast curvature decay and maximal volume growth.} Inventiones Math. 97, pages
313每349, 1989.
\bibitem{Be87} A.L. Besse. \emph{Einstein manifolds. Ergebnisse der Math.} Springer-Verlag, Berlin-New York,
1987.
\bibitem{BW07} C. Bohm and B. Wilking. \emph{Nonnegatively curved manifolds with finite fundamental groups
admit metrics with positive ricci curvature.} G.A.F.A.Geom.func.anal., 17:665每681, 2007.
\bibitem{Br08} S. Brendle. \emph{Einstein manifolds with nonnegative isotropic curvature are locally symmetric.}
arXiv:0812.1335v1.
\bibitem{CT11} A. Chau and L.F. Tam. \emph{On quadratic orthogonal bisectional curvature.} arXiv:1108.6252v1.
\bibitem{CT06} J. Cheeger and G. Tian. \emph{Curvature and injective radius estimates for einstein 4-manifolds.} J.
Amer. Math. Soc. 19-2, pages 487每525, 2006.
\bibitem{Ch06} X. X. Chen. \emph{On k\"{a}hler manifolds with positive orthogonal bisectional curvature.} arX-
iv:math.DG/0606229 v1, 2006.
\bibitem{CC08} B. Chow, S.C. Chu, D. Glickenstein, C. Guenther, J. Isenberg, T. Ivey, D. Knopf, P. Lu,
F. Luo, and L. Ni. \emph{The ricci flow: Techniques and applications part ii: Analytic aspects.}
Mathematical surveys and Monographs, 144, 2008.
\bibitem{CL04} B. Chow and P. Lu. \emph{The maximum principle for systems of parabolic equations subject to an
avoidance set.} Pacific Jour. of Math.214, pages 201每222, 2004.
\bibitem{CLN06} B. Chow, P. Lu, and L. Ni. \emph{Hamilton＊s ricci flow.} Lectures in Contemporary Mathe-
matics,3,Science Pressand Graduate Studies in Mathematics,77,American Mathematical
Society(co-publication), 2006.
\bibitem{Ga88} S. Gallot. \emph{Isoperimetric inequalities based on integral norms of ricci curvature.} Ast谷risque,
(157-158):191每216, 1988.
\bibitem{GT77} D. Gilbarg and N. Trudinger. \emph{Elliptic partial differential equations of second order.} 1977.
\bibitem{GK67} S.I. Goldberg and Shoshichi Kobayashi. \emph{Holomorphic bisectional curvature.} J. Differential
Geometry, pages 225每233, 1967.
\bibitem{GW88} R. Greene and H. Wu. \emph{Lipschitz convergence of riemannian manifolds.} Pacific J. Math. 131,
pages 119每141, 1988.
\bibitem{GH76} P. Griffiths and J. Harris. \emph{Principles of algebraic geometry.} Harvard University, 1976.
\bibitem{GZ10} H. Gu and Z. Zhang. \emph{An extension of mok＊s theorem on the generalized frankel conjecture.}
Science China Mathematics, 53(5):1253每1264, 2010.
\bibitem{Ha86} R. Hamilton. \emph{Four-manifolds with positive curvature operator.} J.Differential Geom.24, pages
153每179, 1986.
\bibitem{Sh13} H.Shao. \emph{Convergence of einstein yang-mills systems.} to appear in Proc. AMS.
\bibitem{Kn06} D. Knopf. \emph{Positivity of ricci curvature under the k\"{a}hler-ricci flow.} Commun.Contemp.Math.8,
pages 123每133, 2006.
\bibitem{Le10} C. Lebrun. \emph{The einstein-maxwell equations, extremal k\"{a}hler metrics, and seiberg-witten the-
ory.} The Many Facets of Geometry: A Tribute to Nigel Hitchin, Oxford University Press,
2010.
\bibitem{LW} C. LeBrun and M. Wang etc. \emph{Surveys in differential geometry: Essays on einstein manifolds.}
\bibitem{Li09} P. Li. \emph{Lectures on geometric analysis.} 2009.
\bibitem{Ma11} D. M芍ximo. \emph{Non-negative ricci curvature on closed manifolds under ricci flow.} Proc. AMS.,
139(2):675每685, 2011.
\bibitem{Se07} H. Seshadri. \emph{Manifolds with nonnegative isotropic curvature.} arXiv:0707.3894v1.
\bibitem{St07} J. Streets. \emph{Ricci yang-mills flow.} Ph.D. dissertation at the department of Mathematics of
Duke University, 2007.
\bibitem{Ta74} S. Tachibana. \emph{A theorem on riemannian manifolds with positive curvature operator.} Proc.
Japan Acad., 50:301C302, 1974.
\bibitem{Ti90} G. Tian. \emph{On calabi＊s conjecture for complex surfaces with positive first chern class.} Inventions
of mathematics, 101:101每172, 1990.
\bibitem{Yo08} A. Young. \emph{Modified ricci flow on a principal bundle.} Ph.D. dissertation at the University of
Texas, 2008.


\end{thebibliography}

\end{document}